\numberwithin{equation}{section}
\theoremstyle{plain}
\newtheorem{theorem}{Theorem}[section]
\newtheorem{lemma}[theorem]{Lemma}
\newtheorem{proposition}[theorem]{Proposition}
\theoremstyle{definition}
\newtheorem{definition}[theorem]{Definition}
\newtheorem{remark}[theorem]{Remark}
\newcommand{\R}{\mathbb{R}}
\newcommand{\A}{\mathbb{A}}
\newcommand{\B}{\mathcal{B}}
\newcommand{\D}{\mathcal{D}}
\newcommand{\F}{\mathcal{F}}
\newcommand{\N}{\mathbb{N}}
\newcommand{\X}{\mathbb{S}}
\newcommand{\V}{\mathcal{V}}
\newcommand{\C}{\mathbb{C}}
\title{A First-order Approach to Accelerated Value Iteration}
\author{%
  Vineet Goyal \\
  IEOR Department, Columbia University\\
  \texttt{vgoyal@ieor.columbia.edu} \\
   \And
   Julien Grand-Clement \\
   ISOM Department, HEC Paris \\
   \texttt{grand-clement@hec.fr} \\
}
\begin{document}

\maketitle

\begin{abstract}
Markov decision processes (MDPs) are used to model stochastic systems in many applications. Several efficient algorithms to compute optimal policies have been studied in the literature, including value iteration (VI) and policy iteration. However, these do not scale well especially when the discount factor for the infinite horizon discounted reward, $\lambda$, gets close to one. In particular, the running time scales as $O \left( 1/(1-\lambda) \right)$ for these algorithms. 
{
In this paper, our goal is to design new algorithms that scale better than previous approaches when $\lambda$ approaches $1$.  Our main contribution is to present  a connection between VI and \textit{gradient descent} and adapt the ideas of \textit{acceleration} and \textit{momentum} in convex optimization to design faster algorithms for MDPs. 
We prove theoretical guarantees of faster convergence of our algorithms for the 
computation of the value function of a policy, where the running times of our algorithms scale as $O \left( 1/\sqrt{1-\lambda} \right)$ for \textit{reversible} MDP instances. The improvement is quite analogous to Nesterov's acceleration and momentum in convex optimization.  We also provide a lower bound on the convergence properties of any first-order algorithm for solving MDPs, presenting a family of MDPs instances for which no algorithm can converge faster than VI when the number of iterations is smaller than the number of states.
We introduce a Safe Accelerated Value Iteration (S-AVI), which alternates between accelerated updates and value iteration updates. Our algorithm S-AVI is worst-case optimal and retains the theoretical convergence properties of VI while exhibiting strong empirical performances, providing significant speedups compared to classical approaches (up to one order of magnitude in many cases) for a large test bed of MDP instances.}
\end{abstract}

\textbf{Keywords:}{ Value Iteration, Markov Decision Process, Accelerated Gradient Descent.}


%

\section{Introduction.}
Markov Decision Processes (MDPs) are widely used to model sequential decision-making problems under uncertainty. The goal is to find a policy that maximizes the infinite horizon discounted reward, for a (fixed) \textit{discount factor} $\lambda \in (0,1)$. Several algorithms have been studied in the literature, including policy iteration (PI,~\cite{howard-1960}) and or value iteration (VI); we refer the reader to \cite{Puterman} for an extensive review of MDPs. 
However, none of these algorithms scale well when $\lambda$ is close to $1$, which is the case in many applications where the effective horizon is large. In particular, the number of iterations of VI {and PI} before convergence scales as $O \left( 1/(1-\lambda) \right)$ {(see Section \ref{sec:MDP} for more detailed convergence rates).}
{Each iteration of VI and PI requires to estimate the Bellman operator at the current iterate,  which becomes computationally expensive for large state and action spaces, and requires to compute high-dimensional expectations. }
{Algorithms for the linear programming formulation of MDPs may scale better as regards  the discount factor but require more computation per iteration~\citep{ye-2011}.}

The goal in this paper is to design  algorithms to solve MDPs that are scalable and more efficient than current approaches.  { In particular,  our goal is to design algorithms that converge in a smaller number of iterations than VI, requiring less evaluations of the Bellman operator and resulting in significant empirical speedups.}
Several faster algorithms have been proposed in this direction in the literature. The most widespread are the \textit{Gauss-Seidel} and \textit{Jacobi}  value iteration algorithms, which iteratively apply operators that are variations of the Bellman operator (see Section 6.3.3 in~\cite{Puterman}). The authors in \cite{herzberg-1996} propose iterative algorithms based on one-step and $k$-step look-ahead, while \cite{shlakhter-2010} compose the Bellman operator with a projection. Even though these algorithms can be proved to converge at linear rate at least as fast as VI, the exact convergence rate is not known. 
\subsection{Our Contributions.}
Our main contributions are as follows.

\vspace{1mm}
\noindent \textbf{Connection between Gradient Descent and Value Iteration.} We present a fundamental analogy between gradient descent (GD) and value iteration (VI). In particular, for the Bellman operator $T$ in value iteration, we consider $\bm{v} - T(\bm{v})$ as the gradient of some function at $\bm{v} \in \R^{n}$. Therefore, the {\em gradient} of this function vanishes at the fixed-point of $T$.  Building upon this,  we propose an iterative algorithm, \textit{Relaxed Value Iteration} (\ref{alg:R-VI}).  { Our analogy between the properties of VI and GD is crucial to choose the step sizes in all the algorithms in our paper.} In particular, we prove convergence of \ref{alg:R-VI} at a linear rate that surprisingly matches the convergence and step sizes of gradient descent. 

\vspace{1mm}
\noindent \textbf{Accelerated Value Iteration.} Our main contribution is to present an accelerated algorithm for value iteration.  Our algorithms, Accelerated Value Iteration \eqref{alg:AVI} and Momentum Value Iteration \eqref{alg:MVI}, rely upon the idea of Nesterov's acceleration and Polyak's Momentum in convex optimization and choosing the step sizes appropriately, {based on our analogy between value iteration and gradient descent}. While we borrow ideas from convex optimization, the proofs for convergence of gradient descent algorithms cannot be directly extended to our setting, since the Bellman operator is neither smooth nor convex. 

\vspace{1mm}
\noindent \textbf{Insights for Accelerated Value Computation.} We derive some insights in the performances of accelerated algorithms for MDPs for the special case of computing the value function of a policy (\textit{Value Computation} (VC)). In this case,  the operator is affine and we show strong theoretical convergence guarantees for Accelerated VC (\ref{alg:AVC}) and Momentum VC (\ref{alg:MVC}).  In particular, when the Markov chain induced by a policy $\pi$ is \textit{irreducible} and \textit{reversible}, Algorithm \ref{alg:AVC} and Algorithm \ref{alg:MVC} compute the value function of $\pi$ at a significantly faster rate than current approaches. Their running time scale as $O \left( 1/\sqrt{1-\lambda} \right)$, a significant improvement compared to $O \left( 1/(1-\lambda) \right)$ using \ref{alg:VC}.

\vspace{1mm}
\noindent \textbf{Lower bound on convergence rate of first-order algorithms.}
We present a family of hard instances of MDPs where the transient convergence of value iteration is a lower bound on the convergence rate of a large class of algorithms (including Algorithm \ref{alg:AVI}) when the number of iterations is smaller than the number of states. This is a key difference with smooth, convex optimization, where Nesterov's accelerated gradient descent is known to converge faster than gradient descent, while attaining the optimal convergence rate over the class of all smooth, convex functions.  This also shows that in all generality, accelerated methods can not provide theoretical speedups in the convergence rates, and may even diverge on some MDP instances with some specific structural properties.

{
\vspace{1mm}
\noindent \textbf{Safe Accelerated Value Iteration.} Motivated by the empirical performances of A-VI, we introduce Safe Accelerated Value Iteration (Algorithm \ref{alg:SAVI}) which combines the theoretical guarantees of VI and the numerical speedups of A-VI.  Our algorithm S-AVI interweaves aggressive A-VI steps with safe VI steps. We show that S-AVI retains the theoretical convergence guarantees of VI, thereby matching our theoretical lower bound on the convergence rate of any first-order method for MDPs.  In our numerical experiments on both structured and random instances, S-AVI performs A-VI steps more than $99 \%$ of the time and exhibits strong empirical performances,  outperforming by up to one order of magnitude the current state-of-the-art approaches.  Our framework for combining aggressive and safe algorithms into a novel algorithm that exhibits the empirical performances of the aggressive method and the convergence guarantees of the safe method is also of independent interest.}

\subsection{Related Literature.}
{Our work mainly relates to three topics of research: (i) faster algorithms for MDPs, (ii) faster computation of the Bellman operator, and (iii) MDP algorithms through the lens of optimization.}

\noindent 
\textbf{Faster algorithms for MDPs.} 
{The facts that some MDP instances require a large value of the discount factor $\lambda$ and the resulting increase in computation time is well recognized in the MDP literature, e.g.~\cite{tang2021taylor} and ~\cite{o2016combining}.
Several variants of value iteration have been considered, based on changing the update operators, either with alternate Bellman operators (such as \textit{Gauss-Seidel} and \textit{Jacobi} value iteration algorithms, see Section 6.3.3 in~\cite{Puterman})), or \textit{Anderson mixing} to speed up empirically the convergence of value iteration (see \cite{ref-c}). \cite{hendrickx-2019} consider acceleration but for the case of $\alpha$-averaged operators instead of Bellman operators.  In contrast here, we propose to change the value iteration algorithm to incorporate the idea of acceleration based on our analogy between value iteration and gradient descent and insights from accelerated algorithms for convex optimization.}

{
\noindent 
\textbf{Faster computation of the Bellman operator.}
Apart from the large value of the discount factor, another potential cause for the slow running time of \ref{alg:VI} is the estimation of the Bellman operator at every iteration.  This is particularly true when solving robust MDPs~\citep{Iyengar,Kuhn}, where the Bellman operator requires to solve large saddle-point problems for every state.  In this case, under some structural assumptions on the uncertainty sets it is possible to design fast algorithms for computing the (robust) Bellman operators, based on binary searches and homotopy methods~\citep{Iyengar,Ho}. Even for the case of nominal MDPs,  estimating the Bellman operator may become computationally expensive when the number of states and actions grow, as it requires to compute high-dimensional expectations and maximums.  Value function approximation ameliorates this issue by projecting the value function onto a lower-dimensional subspace and considering inexact Bellman updates~\citep{DeFarias,petrik2010optimization,scherrer2015approximate}.
In contrast to these methods to accelerate the computation of the Bellman operator,  we focus on designing algorithms that converge in a smaller number of iterations and any method can be used to estimate the Bellman operator.
}

{
\noindent 
\textbf{MDP algorithms through the optimization lens.}
MDPs have been studied extensively in the literature from an optimization perspective.  Recently,  {\cite{wainwright2019stochastic} and \cite{sidford2018variance} have adapted the idea of \textit{variance reduction} to Q-learning and Value Iteration, leading to more sample-efficient algorithms.}
Additionally, analogous to our connection between gradient descent and value iteration, \cite{Puterman-PI} show that policy iteration can be reformulated as a variant of the Newton's algorithm in convex optimization. \cite{protasov-PI} show that a variant of policy iteration has a global linear convergence rate, and a quadratic convergence rate when the current estimate is close enough to the optimal solution analogous to the convergence behavior of the Newton's algorithm. The connection between Newton's algorithm and policy iteration has been extended to the case of stochastic games (\cite{ref-d}). Moreover, \cite{operator-approach} draw a connection between the optimal value function of a two-player stochastic game and a policy iteration algorithm for optimizing the max-min joint spectral radius of two sets of matrices.  We refer the reader to \cite{vieillard2019connections} and \cite{grand2021convex} for reviews of the connections between MDP algorithms and the classical algorithms in convex optimization.
}

\section{Preliminaries on MDP.}\label{sec:MDP}
A Markov Decision Process (MDP) is given by a tuple $(\X,\A,\bm{P},\bm{r},\bm{p}_{0},\lambda)$, where $\X$ is the set of states and $\A$ is the state of actions.  We let $|\X|=n < + \infty,  |\A|=A < + \infty.$ We call $\bm{P} \in \R^{n \times A \times n}$ the transition kernel, $\bm{r} \in \R^{n \times A}$ the state-action reward, $\bm{p}_{0} \in \R^{n}_{+}$ the initial state distribution, and $\lambda \in (0,1)$ the discount factor. A \textit{policy} $\pi \in \R^{n \times A}_{+}$ maps each state to a probability distribution over the set of actions $\A$.  We cal $\Pi$ the set of all (stationary, Markovian) policies. For each policy $\pi$ and transition kernel $\bm{P}$, one can associate a \textit{value function} in $\R^{n}$, defined as $v^{\pi}_{i} = E^{\pi, \boldsymbol{P}} \left[ \sum_{t=0}^{\infty} \lambda^{t}r_{i_{t}a_{t}} \; \bigg| \; i_{0} = i \right], \forall \; i \in \X,$ where $(i_{t},a_{t})$ is the state-action pair visited at time $t$. The goal of the decision-maker is to compute a policy $\pi^{*}$ that maximizes the expected discounted reward, defined as $R(\pi) = \bm{p}_{0}^{\top} \bm{v}^{\pi}.$
Given a policy $\pi$, we define the reward vector $\bm{r}_{\pi} \in \R^{n}$ as $r_{\pi,i} = \sum_{a \in \A} \pi_{ia}r_{ia}, \forall \; i \in \X.$ The policy $\pi$ defines a Markov chain on $\X$, whose transition matrix is $ \bm{L}_{\pi} \in \R^{n \times n}$ defined as $ L_{\pi,ij} = \sum_{a \in \A} \pi_{ia}P_{iaj}, \forall \; (i,j) \in \X \times \X.$
As we mention earlier, several algorithms to compute an optimal policy have been studied, including value iteration, policy iteration and linear programming. We refer the reader to \cite{Puterman} for a detailed discussion.

Since our iterative algorithms use value iteration as a basic step, let us introduce it more specifically. Define the \textit{Bellman operator} $T: \R^{n} \rightarrow \R^{n}$, where for $\bm{v} \in \R^{n}$,
\begin{equation}\label{eq:T_max-def}
T(\bm{v})_{i} = \max_{a \in \A} \{ r_{ia} + \lambda \cdot \bm{P}_{ia}^{\top}\bm{v} \}, \forall \; i \in \X,
\end{equation}
{
where for a state-action pair $(i,a) \in \X \times \A$, the vector $\bm{P}_{ia}$ represents the probability distribution over the next states, i.e., $P_{iaj}$ is a probability that the agent transitions to state $j$ when action $a$ is chosen at state $i$.}
The operator $T$ is a contraction of $\left(\R^{n},\| \cdot \|_{\infty} \right)$: for any vectors $\bm{v},\bm{w} \in \R^{n}$, we have $\| T(\bm{v}) - T(\bm{w}) \|_{\infty}  \leq \lambda \cdot \| \bm{v} - \bm{w}\|_{\infty}.$ {The value iteration (VI) algorithm generates a sequence of iterates $\bm{v}_{0},\bm{v}_{1}, \bm{v}_{2},...$ as follows:}
\begin{equation}\label{alg:VI}\tag{VI}
\bm{v}_{0} \in \R^{n}, \bm{v}_{s+1} = T(\bm{v}_{s}), \forall \; s \geq 0.
\end{equation}
Following \cite{Puterman}, Chapter 6.3, the value function $\bm{v}^{*}$ of the optimal policy $\pi^{*}$ is the unique fixed-point of the operator $T$, and for any $s \geq 0,$ we have $ \| \bm{v}_{s} - \bm{v}^{*} \|_{\infty} \leq \lambda^{s} \| \bm{v}_{0} - \bm{v}^{*} \|_{\infty}$. 
Moreover,  it is known that
\begin{equation}\label{eq:eps-approx-policy}
\| \bm{v}_{s} - \bm{v}_{s+1} \|_{\infty} \leq \epsilon (1-\lambda) (2 \lambda)^{-1} \Rightarrow \| \bm{v}^{\pi_{s}} - \bm{v}^{*} \|_{\infty} \leq \epsilon,
\end{equation}
where $\bm{v}^{\pi_{s}}$ is the value function of $\pi_{s}$, the policy attaining the maximum in each row of $T(\bm{v}_{s})$ {(see for instance Theorem 6.3.3 in \cite{Puterman}).}  
 An $\epsilon$-optimal policy is a policy $\pi$ such that $\| \bm{v}^{\pi} - \bm{v}^{*} \|_{\infty} \leq \epsilon.$ {From \eqref{eq:eps-approx-policy}, an $\epsilon$-policy can be recovered after VI runs for $O\left((1-\lambda)^{-1}\log(\epsilon^{-1}(1-\lambda)^{-1})\right)$ iterations, by choosing the policy attaining the maximum on each row of $T(\bm{v}_{s})$ as in \eqref{eq:T_max-def}. Note that evaluating $T(\bm{v})$ at each iteration can be done in $n^{2}A$ arithmetic operations.} Overall, one can recover an $\epsilon$-optimal policy after a number of arithmetic operations in 
\begin{equation}\label{eq:rate-VI-eps-pol}
\mathcal{O} \left( n^{2} \cdot A \cdot \dfrac{1}{1-\lambda} \log\left(\dfrac{1}{\epsilon \cdot (1-\lambda)}\right) \right).
\end{equation}
Therefore, we know that (i) $\lim_{s \rightarrow \infty} \bm{v}_{s} = \bm{v}^{*}$, (ii) $\bm{v}_{s} = \bm{v}^{*} + \mathcal{O}\left( \lambda^{s} \right)$ and (iii) each iteration requires a number of operations in the order of $n^{2} \cdot A,$  while the number of iterations before convergence grows as $1/(1-\lambda)$. Algorithm \ref{alg:VI} does not scale well when $\lambda$ is close to $1$, which is of interest in many applications.  { For instance,  the evaluation horizon of $1000$ periods from the OpenAI gym control tasks~\citep{brockman2016openai} corresponds to a discount factor satisfying $1/(1-\lambda) = 1000$, i.e., $\lambda=0.999$. Discount factors of at least $0.99$ are also common in healthcare applications of MDPs, see some examples in Chapter 6 and Chapter 8 in \cite{boucherie2017markov}.}
{ To compare with \eqref{eq:rate-VI-eps-pol}, note that the Policy Iteration algorithm returns an optimal policy in at most $O(nA (1-\lambda)^{-1} \log((1-\lambda)^{-1})))$ iterations~\citep{scherrer2016improved}; each iteration requires to invert a matrix, which is more costly than the iteration cost of VI.}
{We also note that for \ref{alg:VI}, better convergence rates than $\lambda$ can be shown for the \textit{span seminorm} (sp) of $\bm{v}_{s} - T(\bm{v}_{s})$, where $sp(\bm{v}) = \max_{i \in \X} v_{i} - \min_{i \in \X} v_{i}$. In particular,  for  \ref{alg:VI} the span seminorm of $\bm{v}_{s} - T(\bm{v}_{s})$ converges to 0 as $O \left( \left( \gamma \lambda \right)^{s} \right)$, where $\gamma \in [0,1]$ is an upper bound on the second largest eigenvalue of the transition matrices of the MDP.  We refer the reader to Chapter 6.6 in \cite{Puterman} for more details. Interestingly, we will see in Section \ref{sec:analysis_MAVC} that the convergence properties of our accelerated algorithms is also related to the eigenvalues of the transition matrices of the MDPs.}

We also consider the problem of computing the value function $\bm{v}^{\pi}$ of a policy $\pi$. The operator associated with a policy $\pi$ {(possibly randomized)}, $T_{\pi}:\R^{n} \rightarrow \R^{n}$, is defined as
\begin{equation}\label{eq:T_pi}
T_{\pi}(\bm{v})_{i} = \sum_{a \in \A} \pi_{ia}\left(r_{ia} + \lambda \bm{P}_{ia}^{\top}\bm{v}\right), \forall \; i \in \X.
\end{equation}
{Note that the relationship between $T$ and $T_{\pi}$: the Bellman operator maximizes each component of $T_{\pi}(\bm{v})$ over the simplex, which in turns yields a deterministic choice of action at each state.}
The sequence of vectors $\left(T_{\pi}^{s}(\bm{v}_{0})\right)_{s \geq 0}$ converges to $\bm{v}^{\pi}$ with a geometric rate of $\lambda$, for any initial vector $\bm{v}_{0}$~\citep{Puterman}. Therefore, one can compute an $\epsilon$-approximation of the value function $\bm{v}^{\pi}$ in a number of iterations in \eqref{eq:rate-VI-eps-pol}, using the following Value Computation (VC) algorithm:
\begin{equation}\label{alg:VC}\tag{VC}
\bm{v}_{0} \in \R^{n}, \bm{v}_{s+1} = T_{\pi}(\bm{v}_{s}), \forall \; s \geq 0.
\end{equation}
Finally, we also need the notion of \textit{reversible} Markov chain.  { Intuitively, a Markov chain is reversible if when at equilibrium (i.e., when the chain starts at the stationary distribution),  reversing the time would yield the same Markov chain.} In particular, we have the following definition. 
\begin{definition}\label{def:reversible}
A Markov chain associated with an irreducible transition matrix $\bm{L}$ and a unique stationary distribution $\bm{\nu}$ is \textit{reversible} if and only if it satisfies the following detailed balance equation: 
$\nu_{i} L_{ij} = \nu_{j} L_{ji}, \forall \; (i,j) \in \X \times \X.$
\end{definition} 
A reversible Markov chains associated with a transition matrix $\bm{L}$ has the remarkable property that the eigenvalues of $\bm{L}$ are all real, as proved in Chapter 6, Section 2.1 in \cite{bremaud2013markov}.  This will prove useful when we analyze the rates of convergence of our accelerated algorithms in Section \ref{sec:analysis_MAVC}.
\section{Value Iteration and Gradient Descent.}\label{sec:VI-GD}
In this section we present a connection between value iteration and gradient descent. Let us first recall the main results on gradient descent.
For a given differentiable function $f: \R^{n} \rightarrow \R$ and a sequence of non-negative scalars $(\alpha_{s})_{s \geq 0}$, the gradient descent algorithm (Algorithm \ref{alg:GD}) is described as:
\begin{equation}\label{alg:GD}\tag{GD}
\bm{v}_{0} \in \R^{n}, \bm{v}_{s+1} = \bm{v}_{s}-\alpha_{s} \nabla f (\bm{v}_{s}), \forall \; s \geq 0.
\end{equation}
{
If $f$ is convex, then $\left( f(\bm{v}_{s}) \right)_{s \geq 0}$ will converge to the minimum of $f$ as soon as $\sum_{s=0}^{+ \infty} \alpha_{s} = + \infty, \lim_{s \rightarrow + \infty} \alpha_{s}=0$;} if additionally the function $f$ is $\mu$-strongly convex and $L$-Lipschitz continuous ($L>\mu >0)$, the sequence $(\bm{v}_{s})_{s \geq 0}$ produced by \ref{alg:GD} does converge at a linear rate to $\bm{v}_{f}$ the minimizer of $f$ as soon as $\alpha_{s} = \alpha \in (0,2/L], \forall \; s \geq 0$ (see Chapter 2.1.5 in \cite{nesterov-book}). Moreover, we obtain an optimal rate with the fixed step size $\alpha= 2/(L+\mu)$ for which, for $\kappa=L/\mu$, \[ \| \bm{v}_{s} - \bm{v}_{f} \|_{2}^{2} = \mathcal{O} \left( \left( \dfrac{1-1/\kappa}{1+1/\kappa} \right)^{s} \right).\]
 
In order to compute an optimal policy $\pi^{*}$, we want to compute the vector $\bm{v}^{*}$, the unique solution of $\bm{v}^{*} - T(\bm{v}^{*})=\bm{0}$.  {One of our main idea is to treat the vector $\bm{v} - T(\bm{v})=\left( \bm{I} - T \right)(\bm{v})$ as the gradient of some function from $\R^{n}$ to $\R$, applied to the vector $\bm{v}$, and we are looking for a vector $\bm{v}^{*}$ at which this gradient vanishes, i.e., for which $\bm{v}^{*} - T(\bm{v}^{*}) = \bm{0}.$ Note that this is only an analogy that we use to build novel algorithms and tune the step sizes.  In particular,  we can exactly characterize the MDP instances for which there exists a function $f: \R^{n} \rightarrow \R$ such that $\nabla f (\bm{v})=\left(\bm{I}-T\right)(\bm{v})$. In short, if $\nabla f = \bm{I}-T$, then on open sets where $T$ is differentiable we can define the Hessian of $f$. This imposes some conditions on the symmetry of the transition rates of the MDP instances.  We present the details in Appendix \ref{app:non-existence-of-f}.} Inspired by this analogy, we consider the following algorithm, defined for any choice of step sizes $(\alpha_{s})_{s \geq 0}$.
\begin{equation}\label{alg:R-VI}\tag{R-VI}
\boxed{
\bm{v}_{0} \in \R^{n}, \bm{v}_{s+1} = \bm{v}_{s}-\alpha_{s} \left( \bm{v}_{s}- T(\bm{v}_{s}) \right), \forall \; s \geq 0.}
\end{equation}
Note that this idea was also considered in \cite{relax-VI-2} and \cite{relax-VI-1} where the authors refer to it as \textit{relaxation}. Therefore, we refer to Algorithm \ref{alg:R-VI} as \textit{Relaxed Value Iteration} (R-VI). However, no convergence guarantee or formal connection to \ref{alg:GD} was provided. Additionally, Algorithm \ref{alg:R-VI} is reminiscent to Krasnoselskii-Mann (KM) iteration in non-expansive operator theory (see \cite{krasnoselskii-1955}): when $\alpha_{s} \in (0,1), \forall \; s \geq 0$ and $T$ is a non-expansive operator ($\lambda \leq 1$), Algorithm \ref{alg:R-VI} is known to converge to one of the fixed-points of $T$, but no rate is provided (\cite{krasnoselskii-1955}). Note that in the case of a non-expansive operator, this convergence result excludes the case $\alpha =1$, which recovers the original value iteration (Algorithm \ref{alg:VI}). We present the following extension to the result for non-expansive operators.
\begin{proposition}\label{prop:VI-step size} Consider $\left( \bm{v}_{s} \right)_{s \geq 0}$ the iterates of \ref{alg:R-VI} and $\bm{v}^{*}$ the unique fixed-point of the Bellman operator $T$.
\begin{enumerate}
\item Let the step sizes in Algorithm \ref{alg:R-VI} be $\alpha_{s} = \alpha \in (0,2/(1+\lambda)), \forall \; s \geq 0.$
Then
\[\| \bm{v}_{s} - \bm{v}^{*} \|_{\infty} \leq  \| \bm{v}_{0} - \bm{v}^{*} \|_{\infty} \cdot ( \lambda \cdot \alpha + | 1- \alpha |)^{s} , \forall \; s \geq 0.\]
The optimal rate is $\lambda$, attained when $\alpha=1$.
\item 
Let $(\alpha_{s})_{s \geq 0}$ be a sequence of non-negative scalars such that \\
$ \sum_{s=0}^{+\infty} \alpha_{s} = + \infty, \lim_{s \rightarrow +\infty} \alpha_{s} = 0.$
Then
\[\| \bm{v}_{s} - \bm{v}^{*} \|_{\infty} \leq \| \bm{v}_{0} - \bm{v}^{*} \|_{\infty}  \cdot \mathcal{O} \left(  \exp \left( - \sum_{\ell = 0}^{s} \alpha_{\ell} \right) \right)  , \forall \; s \geq 0.\]
\end{enumerate}
\end{proposition}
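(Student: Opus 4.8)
The plan is to reduce both statements to a single per-step contraction estimate and then treat the constant and the vanishing step-size regimes separately. First I would subtract the fixed point from the update rule. Since $\bm{v}^{*} = T(\bm{v}^{*})$, the correction term $\alpha_{s}(\bm{v}^{*} - T(\bm{v}^{*}))$ vanishes, so I can rewrite the iterate error as
\[ \bm{v}_{s+1} - \bm{v}^{*} = (1-\alpha_{s})(\bm{v}_{s} - \bm{v}^{*}) + \alpha_{s}\left( T(\bm{v}_{s}) - T(\bm{v}^{*}) \right). \]
Taking $\| \cdot \|_{\infty}$, applying the triangle inequality (legitimate since $\alpha_{s} \geq 0$), and using the $\lambda$-contraction property of $T$ recalled in Section~\ref{sec:MDP} yields the key recursion
\[ \| \bm{v}_{s+1} - \bm{v}^{*} \|_{\infty} \leq \left( |1-\alpha_{s}| + \lambda \alpha_{s} \right) \| \bm{v}_{s} - \bm{v}^{*} \|_{\infty}. \]
Everything else follows from controlling the scalar factor $\rho(\alpha_{s}) := |1-\alpha_{s}| + \lambda \alpha_{s}$.

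For the first statement, with $\alpha_{s} \equiv \alpha$, I would iterate the recursion to obtain $\| \bm{v}_{s} - \bm{v}^{*} \|_{\infty} \leq \rho(\alpha)^{s} \| \bm{v}_{0} - \bm{v}^{*} \|_{\infty}$, which is exactly the claimed bound. To identify the optimal rate I would split on the sign of $1-\alpha$: for $\alpha \in (0,1]$ we have $\rho(\alpha) = 1 - \alpha(1-\lambda)$, strictly decreasing in $\alpha$; for $\alpha \in (1, 2/(1+\lambda))$ we have $\rho(\alpha) = \alpha(1+\lambda) - 1$, strictly increasing in $\alpha$ and below $1$ precisely because $\alpha < 2/(1+\lambda)$. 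The two branches meet at $\alpha = 1$ with $\rho(1) = \lambda$, so $\alpha = 1$ minimizes the rate and recovers plain value iteration.

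For the second statement, I would exploit $\lim_{s} \alpha_{s} = 0$: there is an index $N$ beyond which $\alpha_{s} \leq 1$, so for $s \geq N$ the factor simplifies to $\rho(\alpha_{s}) = 1 - (1-\lambda)\alpha_{s}$. Telescoping the recursion from $N$ and applying the elementary inequality $1 - x \leq e^{-x}$ to each factor gives
\[ \| \bm{v}_{s} - \bm{v}^{*} \|_{\infty} \leq \| \bm{v}_{N} - \bm{v}^{*} \|_{\infty} \cdot \exp\left( -(1-\lambda) \sum_{\ell = N}^{s-1} \alpha_{\ell} \right). \]
The finitely many early factors $\prod_{\ell=0}^{N-1} \rho(\alpha_{\ell})$ and the fixed constant $1-\lambda$ are absorbed into the $\mathcal{O}(\cdot)$, while the hypothesis $\sum_{\ell} \alpha_{\ell} = +\infty$ forces the exponent to diverge, so the bound tends to zero.

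The main obstacle is precisely the bookkeeping at small indices. Since no upper bound on $\alpha_{s}$ is assumed for small $s$, the per-step factors $\rho(\alpha_{s})$ need not be contractive (they may exceed $1$), so convergence is driven entirely by the tail where $\alpha_{s} \leq 1$; one must argue carefully that the product over the finite initial segment is a harmless multiplicative constant rather than part of the asymptotic decay, and that the divergence of $\sum_{\ell} \alpha_{\ell}$ rather than its summability is what drives the error to zero.
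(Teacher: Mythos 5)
Your proposal is correct and follows essentially the same route as the paper's proof: the same per-step estimate $\| \bm{v}_{s+1} - \bm{v}^{*} \|_{\infty} \leq \left( |1-\alpha_{s}| + \lambda \alpha_{s} \right) \| \bm{v}_{s} - \bm{v}^{*} \|_{\infty}$ obtained by subtracting the fixed point and using the triangle inequality plus the $\lambda$-contraction, then iteration and the case split at $\alpha = 1$ for part 1, and for part 2 the same tail argument (an index beyond which $\alpha_{s} \leq 1$, the product $\prod_{\ell} (1-(1-\lambda)\alpha_{\ell})$, and $1-x \leq e^{-x}$). One caveat: your remark that the constant $1-\lambda$ is ``absorbed into the $\mathcal{O}(\cdot)$'' is loose, since $\exp(-(1-\lambda)S)$ is not $\mathcal{O}(\exp(-S))$ as $S \to \infty$, but the paper's own proof makes exactly the same silent identification, so your argument matches it step for step.
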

\begin{proof}
\begin{enumerate}
\item Let $\alpha \in \R_{+}$ and $s \geq 0.$ We have
\begin{align}
\| \bm{v}_{s+1} - \bm{v}^{*} \|_{\infty} & = \| (1-\alpha)\bm{v}_{s} +\alpha T(\bm{v}_{s}) - \bm{v}^{*}  \|_{\infty} \nonumber \\
& = \| (1-\alpha)(\bm{v}_{s} - \bm{v}^{*}  ) + \alpha ( T(\bm{v}_{s}) - \bm{v}^{*} ) \|_{\infty} \nonumber \\
& \leq \| (1-\alpha)(\bm{v}_{s} - \bm{v}^{*}  )\|_{\infty}  + \alpha \cdot \| T(\bm{v}_{s}) - \bm{v}^{*}  \|_{\infty} \label{eq:proof-R-VI} \\
& \leq | 1 -\alpha | \cdot \|\bm{v}_{s} - \bm{v}^{*}  \|_{\infty} +  \alpha \cdot  \| T(\bm{v}_{s}) - T(\bm{v}^{*})   \|_{\infty} \label{eq:proof-VI-2} \\
& \leq | 1 -\alpha | \cdot \|\bm{v}_{s} -\bm{v}^{*}   \|_{\infty} +  \alpha \cdot \lambda \cdot  \|  \bm{v}_{s} - \bm{v}^{*}  \|_{\infty} \label{eq:proof-VI-3}\\
& \leq ( \lambda \cdot \alpha + | 1- \alpha |) \cdot \| \bm{v}_{s} - \bm{v}^{*}  \|_{\infty},
\end{align}
where \eqref{eq:proof-R-VI} follows from triangle inequality, \eqref{eq:proof-VI-2} follows from $\bm{v}^{*} = T(\bm{v}^{*})$, and \eqref{eq:proof-VI-3} follows from the Bellman operator being a contraction with factor $\lambda$. We can therefore conclude that
\[
\| \bm{v}_{s} - \bm{v}^{*} \|_{\infty} \leq ( \lambda \cdot \alpha + | 1- \alpha |)^{s} \cdot \| \bm{v}_{0} - \bm{v}^{*}  \|_{\infty}, \forall \; s \geq 0.
\]
Now we can see that the function $ \alpha \mapsto ( \lambda \cdot \alpha + | 1- \alpha |)$ remains strictly smaller than $1$ only for $\alpha \in (0,2/(1+\lambda)),$ and attains its minimum, $\lambda$, for $\alpha =1$.
\item Since $\lim_{s \rightarrow \infty} \alpha_{s} = 0$, there exists an integer $s_{0} \in \N$ such that $\alpha_{s_{0}} \leq 1, \forall \; s \geq s_{0}.$ The same reasoning as above gives
\[ \| \bm{v}_{s} - \bm{v}^{*} \|_{\infty} \leq \left( \prod_{\ell=s_{0}}^{s-1} ( 1 - (1-\lambda)\alpha_{\ell})) \right) \cdot \| \bm{v}_{s_{0}} - \bm{v}^{*}  \|_{\infty}, \forall \; s \geq s_{0}. \]
Finally,  { from the convexity of the exponential function, we have $1-x \leq e^{-x}$ for any $x \in \R$ and therefore} \[\prod_{\ell=s_{0}}^{s-1} ( 1 - (1-\lambda)\alpha_{\ell}))  = \mathcal{O} \left(  \exp \left( - \sum_{\ell = s_{0}}^{s} \alpha_{\ell} \right) \right)=\mathcal{O} \left(  \exp \left( - \sum_{\ell = 0}^{s} \alpha_{\ell} \right) \right).\]
\end{enumerate}
\end{proof}
Several remarks are in order.
{
\begin{remark}[Optimal rate for \ref{alg:R-VI}]
Note that the optimal theoretical convergence rate of \ref{alg:R-VI} is $\lambda$, attained for $\alpha_{s} = 1, \forall \; s \geq 0$.  Therefore, \ref{alg:R-VI} does not provide a better theoretical convergence rate than \ref{alg:VI}. However, Proposition \ref{prop:VI-step size} shows that the convergence properties of \ref{alg:GD} can be extended to \ref{alg:R-VI}, with analogous convergence rates and choices of step sizes. In the next section we will build upon this analogy to derive novel accelerated algorithms for MDPs and choose the step sizes.
\end{remark}}
\begin{remark}[Some other contractions]
Note that Proposition \ref{prop:VI-step size} holds for any operator that is a contraction for $\| \cdot \|_{\infty}$. Therefore, it can also be applied to compute the value function of a policy $\bm{v}^{\pi}$, which is the fixed-point of $T_{\pi}$ \eqref{eq:T_pi}.  {Other contracting operators of interest include softmax and log-sum-exp operators ~\citep{boltzman-1}},  and robust Bellman operators for Robust MDPs (see \cite{Iyengar},\cite{Kuhn}, \cite{GGC}).
\end{remark}
We discuss here the choice of the upper bound $2/(1+\lambda)$ for the step size. Recall that $T$ is a contraction operator with factor $\lambda$. From the triangle inequality we have, for any vectors $\bm{v},\bm{w} \in \R^{n},$
\begin{align}
(1-\lambda) \cdot \| \bm{v} - \bm{w} \|_{\infty} & \leq \| (\bm{I} - T)(\bm{v}) - (\bm{I} - T)(\bm{w}) \|_{\infty},
\label{eq:mu-infty} \\
\| (\bm{I} - T)(\bm{v}) - (\bm{I} - T)(\bm{w}) \|_{\infty} & \leq (1+\lambda) \cdot \| \bm{v} - \bm{w} \|_{\infty}. \label{eq:L-infty}
\end{align}
Note that \eqref{eq:mu-infty}-\eqref{eq:L-infty} are the analogous of the following inequalities for differentiable, $\mu$-strongly convex, $L$-Lipschitz continuous function $f: \R^{n} \rightarrow \R$: for all vectors $\bm{v},\bm{w} \in \R^{n},$
\begin{align}
\mu \cdot \| \bm{v} - \bm{w} \|_{2} & \leq \| \nabla f(\bm{v}) - \nabla f (\bm{w}) \|_{2}, \label{eq:mu-2} \\ 
\| \nabla f(\bm{v}) - \nabla f (\bm{w}) \|_{2} & \leq L \cdot \| \bm{v} - \bm{w} \|_{2}. \label{eq:L-2}
\end{align}
{
Based on this analogy, we write here $\mu = 1- \lambda, L=1+\lambda.$} Note that in Proposition \ref{prop:VI-step size}, the maximum fixed step size guaranteeing convergence is $2/(1+\lambda)=2/L$, and that the optimal rate is $\lambda=(1+\lambda - (1 -\lambda))/(1+\lambda + 1 - \lambda) = (L-\mu)/(L + \mu)$, attained for $\alpha=1=2 / (1 + \lambda + 1 - \lambda) = 2 / (L + \mu).$ Therefore, the properties of \ref{alg:GD} for a function satisfying \eqref{eq:mu-2}-\eqref{eq:L-2} do translate for Algorithm \ref{alg:R-VI} with Bellman operator $T$ satisfying \eqref{eq:mu-infty}-\eqref{eq:L-infty}.
Similarly, Proposition \ref{prop:VI-step size} is reminiscent to the following conditions of convergence for gradient descent with varying step size:
$ \sum_{n=0}^{+\infty} \alpha_{n} = + \infty, \sum_{n=0}^{+\infty} \alpha_{n}^{2} < + \infty. $ 
{Our analogy between \eqref{eq:mu-infty}-\eqref{eq:L-infty} and \eqref{eq:mu-2}-\eqref{eq:L-2} will prove useful to derive the step sizes of our accelerated algorithms in the next section.}

\begin{remark}[Comparison with the proof for \ref{alg:GD}]
We would like to emphasize that the convergence proof for \ref{alg:GD} does not readily extend to Algorithm \ref{alg:R-VI}. In particular, there is a fundamental difference between $\| \cdot\|_{\infty}$ and $\| \cdot \|_{2}$, since $\| \cdot \|_{2}$ is naturally related to the scalar product of $\R^{n}$, i.e., $\| \bm{v} \|_{2}^{2} = \bm{v}^{\top}\bm{v}$, whereas this is not the case for $\| \cdot \|_{\infty}.$ Therefore, we can not rely on scalar products and Taylor expansions of first order to prove our convergence results. Moreover, if one were to infer some new constants $\mu,L$ for \eqref{eq:mu-2}-\eqref{eq:L-2} from \eqref{eq:mu-infty}-\eqref{eq:L-infty} and the equivalence of norms in finite dimension, one would obtain (in general) $\mu = (1-\lambda)/\sqrt{n},L=\sqrt{n}(1+\lambda)$, essentially loosing a factor of $n$ as regards to the convergence rate guarantee of Algorithm \ref{alg:R-VI}. 
\end{remark}
\section{Accelerated and Momentum Value Iteration.}\label{sec:introducing-acc-mom-vi}
{In this section, we build upon the connection between \ref{alg:VI} and \ref{alg:GD} to introduce accelerated and momentum algorithms for computing the fixed-point of the Bellman operator $T$.}
\subsection{Accelerated Value Iteration.}
Accelerated Gradient Descent (A-GD, \cite{nesterov-1983} and \cite{nesterov-book}) has been extended to popular iterative methods such as FISTA (\cite{fista}) and F-ADMM (\cite{fadmm}). Building upon our analogy between gradient descent and value iteration, we extend A-GD to an accelerated iterative algorithm for computing $\bm{v}^{*}$, the unique fixed-point of the contraction $T$. In particular,  {for any sequences of step sizes} $(\alpha_{s})_{s \geq 0}$ and $(\gamma_{s})_{s \geq 0} \in \R^{\N}$,  we propose the following Accelerated Value Iteration \eqref{alg:AVI} algorithm.
\begin{equation}\label{alg:AVI}\tag{A-VI}
\boxed{
\bm{v}_{0},\bm{v}_{1} \in \R^{n}, \begin{cases}
    \bm{h}_{s}=\bm{v}_{s}+\gamma_{s}\cdot \left( \bm{v}_{s}-\bm{v}_{s-1} \right), \\
	\bm{v}_{s+1} \gets \bm{h}_{s}-\alpha_{s} \left( \bm{h}_{s}- T \left( \bm{h}_{s} \right) \right), \end{cases}\forall \; s \geq 1.}
\end{equation}

\vspace{1mm}
\noindent {\textbf{Choice of constant step sizes.}  We base our choice of step sizes on our connection between value iteration and gradient descent. In particular, \cite{nesterov-book} (Algorithm 2.2.11) chooses $\alpha=1/L,\gamma = (\sqrt{L}-\sqrt{\mu})/(\sqrt{L}+\sqrt{\mu})$. From our analogy between \ref{alg:VI} and \ref{alg:GD}, and from \eqref{eq:mu-infty}-\eqref{eq:L-infty} and \eqref{eq:mu-2}-\eqref{eq:L-2}, we have $\mu= (1- \lambda), L=(1+ \lambda),$ which corresponds to our first choice of fixed step sizes of
\begin{equation}\label{eq:tuning-1}
\alpha_{s}  = \alpha = 1/(1+\lambda),
 \gamma_{s} = \gamma = \left(1-\sqrt{1- \lambda^{2}}\right)/\lambda, \forall s \; \geq 1.
 \end{equation}
 \begin{remark}
 In smooth convex optimization, it can be challenging to estimate the value of the Lipschitz-constant $L$ (respectively, of the strong-convexity constant $\mu$), thereby making it difficult to find the optimal step sizes. \cite{estimation-mu} proposes backtracking schemes to evaluate optimal step sizes. In contrast, for our algorithm, the Lipschitz-constant is replaced by $(1+\lambda)$ (respectively, the strong-convexity parameter is replaced by $(1-\lambda)$), and it is the discount factor $\lambda$ that needs to be evaluated. The value of the discount factor can be seen as a choice of the decision-maker, depending on the impact of future rewards, i.e., depending on the effective time horizon.
 \end{remark}
\subsection{Momentum Value Iteration.}
Momentum Gradient Descent is a popular method to improve the convergence of Algorithm \ref{alg:GD} (\citet{polyak-1964}, \citet{heavyball-2015}). For any sequences of scalars $(\alpha_{s})_{s \geq 0}$ and $(\beta_{s})_{s \geq 0} \in \R^{\N}$,  we propose the following Momentum Value Iteration \eqref{alg:MVI} algorithm.
\begin{equation}\label{alg:MVI}\tag{M-VI}
\boxed{
\bm{v}_{0},\bm{v}_{1} \in \R^{n},
	\bm{v}_{s+1} =\bm{v}_{s}-\alpha_{s} \left( \bm{v}_{s}- T \left( \bm{v}_{s} \right) \right)  + \beta_{s}\cdot \left( \bm{v}_{s}-\bm{v}_{s-1} \right) , \forall \; s \geq 1.}
\end{equation}
\noindent {\textbf{Choice of constant step sizes.} The authors in \citet{polyak-1964} and \citet{heavyball-2015} show that if a smooth, strongly convex function is twice differentiable, the optimal choice of constant for Momentum GD is
 $\alpha=4/(\sqrt{L} + \sqrt{\mu})^{2},\beta = (\sqrt{L}-\sqrt{\mu})^{2}/(\sqrt{L}+\sqrt{\mu})^{2}$. We use $\mu= (1- \lambda), L=(1+ \lambda)$ based on our analogy between \ref{alg:VI} and \ref{alg:GD}, and from \eqref{eq:mu-infty}-\eqref{eq:L-infty} and \eqref{eq:mu-2}-\eqref{eq:L-2}. Therefore, we consider the following choice of step sizes.
 \begin{equation}\label{eq:tuning-MVC-1}
\alpha_{s}=\alpha  =2/(1+\sqrt{1-\lambda^{2}}), 
\gamma_{s}=\gamma  =(1-\sqrt{1-\lambda^{2}})/(1+\sqrt{1-\lambda^{2}}), \forall \; s \geq 1.
\end{equation} 
\section{Accelerated and Momentum for affine operators: the case of Value Computation.}\label{sec:analysis_MAVC}
{As a first step toward analyzing \ref{alg:AVI} and \ref{alg:MVI}, we focus on the specific problem of computing the value function $\bm{v}^{\pi}$ of a policy $\pi$.  This is equivalent to computing the fixed-point of the \textit{affine} operator $T_{\pi}$ introduced in \eqref{eq:T_pi}.   Recall that a classical fixed-point iteration scheme such as Algorithm \ref{alg:VC} converges to $\bm{v}^{\pi}$ as $O(\lambda^{s})$ after $s$ iterations.  In this section, we consider Accelerated Value Computation (Algorithm \eqref{alg:AVC}) and Momentum Value Computation (Algorithm \eqref{alg:MVC}) and analyze their convergence properties.
\begin{equation}\label{alg:AVC}\tag{A-VC}
\bm{v}_{0},\bm{v}_{1} \in \R^{n}, \begin{cases}
    \bm{h}_{s}=\bm{v}_{s}+\gamma_{s}\cdot \left( \bm{v}_{s}-\bm{v}_{s-1} \right), \\
	\bm{v}_{s+1} \gets \bm{h}_{s}-\alpha_{s} \left( \bm{h}_{s}- T_{\pi} \left( \bm{h}_{s} \right) \right), \end{cases}\forall \; s \geq 1.
\end{equation}
\begin{equation}\label{alg:MVC}\tag{M-VC}
\bm{v}_{0},\bm{v}_{1} \in \R^{n},
	\bm{v}_{s+1} =\bm{v}_{s}-\alpha_{s} \left( \bm{v}_{s}-T_{\pi} \left( \bm{v}_{s} \right) \right) + \beta_{s}\cdot \left( \bm{v}_{s}-\bm{v}_{s-1} \right)  , \forall \; s \geq 1.
\end{equation}}
\subsection{Analysis of Accelerated Value Computation.}\label{sec:analysis_AVC}
In this section we provide a theoretical convergence rate for Algorithm \ref{alg:AVC} for some specific MDP instances.
\subsubsection{Convergence rate}
Consider an MDP instance and a policy $\pi$.  We will show that Algorithm \ref{alg:AVC} returns the value function $\bm{v}^{\pi}$ significantly faster than \ref{alg:VC}, if the Markov chain defined by the policy $\pi$ is irreducible and reversible (see Definition \ref{def:reversible}). 
The irreducibility condition is very general and to ensure that the Markov chain has a unique stationary distribution. 
{The reversibility assumption has appeared in some applications of Reinforcement Learning~\citep{garnet-1,revers-2}, although it is more widespread in Markov Chain Monte Carlo methods \citep{revers-3} and random walks on graph \citep{rd-walk-1,bremaud2013markov}. }

{We present our main result on the convergence rate of \ref{alg:AVC} in the next theorem.}

\begin{theorem}\label{th:AVI-value-computation}
Consider an MDP and a policy $\pi$ defining an irreducible, reversible Markov chain on the set of states $\X$.
Consider $(\bm{v}_{s})_{s \geq 0}$ the sequence of iterates of Algorithm \ref{alg:AVC} with step sizes \eqref{eq:tuning-1}.
Then $(\bm{v}_{s})_{s \geq 0}$ converges to $\bm{v}^{\pi}$.
Moreover, for any $  \eta>0 $, for $\kappa = (1+\lambda)/(1-\lambda)$,
\[\bm{v}_{s} = \bm{v}^{\pi} + o\left( \left(1- \sqrt{\dfrac{1}{\kappa}} + \eta \right)^{s}\right), \forall \; s \geq 0.\]
Additionally,  we always have $0 < 1-\sqrt{1/\kappa} < \lambda$ for $\lambda \in (0,1)$, i.e., \ref{alg:AVC} enjoys better convergence guarantees than \ref{alg:VC}.
\end{theorem}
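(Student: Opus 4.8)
The plan is to exploit the fact that, unlike the Bellman operator $T$, the policy operator $T_{\pi}$ in \eqref{eq:T_pi} is \emph{affine}: $T_{\pi}(\bm{v}) = \bm{r}_{\pi} + \lambda \bm{L}_{\pi}\bm{v}$, so that \ref{alg:AVC} becomes an exactly \emph{linear} two-step recursion in the error $\bm{e}_{s} := \bm{v}_{s} - \bm{v}^{\pi}$. Using $\bm{v}^{\pi} = T_{\pi}(\bm{v}^{\pi})$ one gets $\bm{h}_{s} - T_{\pi}(\bm{h}_{s}) = (\bm{I} - \lambda\bm{L}_{\pi})(\bm{h}_{s} - \bm{v}^{\pi})$, and substituting $\bm{h}_{s} = \bm{v}_{s} + \gamma(\bm{v}_{s} - \bm{v}_{s-1})$ with the constant step sizes \eqref{eq:tuning-1} (so that $\alpha = 1/(1+\lambda)$ gives $(1-\alpha)\bm{I} + \alpha\lambda\bm{L}_{\pi} = \bm{M}$) yields
\[
\bm{e}_{s+1} = \bm{M}\bigl((1+\gamma)\bm{e}_{s} - \gamma\, \bm{e}_{s-1}\bigr), \qquad \bm{M} := \tfrac{\lambda}{1+\lambda}\bigl(\bm{I} + \bm{L}_{\pi}\bigr).
\]
I would then stack consecutive errors into $\bm{w}_{s} = (\bm{e}_{s}^{\top}, \bm{e}_{s-1}^{\top})^{\top}$ so that $\bm{w}_{s+1} = \bm{A}\bm{w}_{s}$ with the block companion matrix
\[
\bm{A} = \begin{pmatrix} (1+\gamma)\bm{M} & -\gamma\bm{M} \\ \bm{I} & \bm{0} \end{pmatrix},
\]
reducing the whole statement to controlling the spectral radius and Jordan structure of $\bm{A}$.

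The key structural input is reversibility. By Definition \ref{def:reversible}, $\bm{L}_{\pi}$ is self-adjoint for the inner product $\langle \bm{x},\bm{y}\rangle_{\bm{\nu}} = \sum_{i}\nu_{i}x_{i}y_{i}$; equivalently $\bm{D}^{1/2}\bm{L}_{\pi}\bm{D}^{-1/2}$ is symmetric for $\bm{D} = \mathrm{diag}(\bm{\nu})$. Hence $\bm{L}_{\pi}$ has a full set of real eigenvalues $1 = \mu_{1} > \mu_{2} \geq \cdots \geq \mu_{n} \geq -1$, the top one simple by irreducibility, and $\bm{M}$ is simultaneously diagonalizable with eigenvalues $m_{k} = \tfrac{\lambda}{1+\lambda}(1+\mu_{k}) \in [0, m_{\max}]$, where $m_{\max} := 2\lambda/(1+\lambda)$. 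Writing the recursion in this eigenbasis \emph{decouples} it into $n$ scalar two-step recursions $\epsilon_{s+1} = m_{k}\bigl((1+\gamma)\epsilon_{s} - \gamma\epsilon_{s-1}\bigr)$; equivalently $\bm{A}$ block-diagonalizes into $2\times 2$ companion blocks $\bm{A}_{k}$ of the quadratics $x^{2} - (1+\gamma)m_{k}x + \gamma m_{k}$.

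It then remains to bound the roots of each quadratic. A direct computation shows its discriminant $(1+\gamma)^{2}m_{k}^{2} - 4\gamma m_{k} = m_{k}\bigl((1+\gamma)^{2}m_{k} - 4\gamma\bigr)$ is nonpositive exactly when $m_{k} \le 4\gamma/(1+\gamma)^{2}$, and with the step sizes \eqref{eq:tuning-1} one checks $4\gamma/(1+\gamma)^{2} = (L-\mu)/L = 2\lambda/(1+\lambda)$ for $L = 1+\lambda$, $\mu = 1-\lambda$. Since this threshold equals $m_{\max}$, \emph{every} mode lies in the complex-conjugate-root regime, and its two roots have modulus $\sqrt{\gamma m_{k}}$ (their product being $\gamma m_{k}$). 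As $\sqrt{\gamma m_{k}}$ increases in $m_{k}$, it is maximized at $m_{\max}$, where $\sqrt{\gamma m_{\max}} = (\sqrt{L}-\sqrt{\mu})/\sqrt{L} = 1 - \sqrt{1/\kappa}$ with $\kappa = (1+\lambda)/(1-\lambda)$. The unique mode attaining this value is the Perron mode $\mu_{1} = 1$, for which the discriminant vanishes and the quadratic has the \emph{double} root $1 - \sqrt{1/\kappa}$; all other modes have modulus $\sqrt{\gamma m_{k}} < 1 - \sqrt{1/\kappa} < 1$.

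Finally I would assemble the modes. The block $\bm{A}_{1}$ at the Perron mode has a repeated eigenvalue and is a genuine $2\times 2$ Jordan block, so $\|\bm{A}_{1}^{s}\| = \Theta\bigl(s\,(1-\sqrt{1/\kappa})^{s}\bigr)$, while every other block is diagonalizable with spectral radius strictly below $1 - \sqrt{1/\kappa}$. As there are only finitely many ($n$) modes, $\|\bm{e}_{s}\| \le \|\bm{A}^{s}\|\,\|\bm{w}_{1}\| = O\bigl(s\,(1-\sqrt{1/\kappa})^{s}\bigr)$, which is $o\bigl((1-\sqrt{1/\kappa}+\eta)^{s}\bigr)$ for every $\eta > 0$ since $s\,(\rho/(\rho+\eta))^{s}\to 0$ with $\rho = 1-\sqrt{1/\kappa}$; in particular the spectral radius of $\bm{A}$ equals $1 - \sqrt{1/\kappa} < 1$, giving $\bm{v}_{s} \to \bm{v}^{\pi}$. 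The main obstacle is precisely the defective double root at the Perron mode: this is why the clean geometric bound $O(\rho^{s})$ must be relaxed to the stated $o\bigl((\rho+\eta)^{s}\bigr)$, and it is also where reversibility is essential — without real eigenvalues the modes could leave the complex-root regime and the modulus identity $\sqrt{\gamma m_{k}}$ would break, potentially destroying the accelerated $1/\sqrt{1-\lambda}$ rate. A secondary point is that the constants hidden in $O(\cdot)$ depend on the conditioning of the eigenbasis (through $\bm{D}$) and on $n$, which is harmless for the asymptotic-in-$s$ claim.
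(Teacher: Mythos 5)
Your proposal is correct and follows essentially the same route as the paper: you derive the identical $2n\times 2n$ block companion matrix (your $\bm{A}$ is exactly the paper's $\bm{B}_{\pi}$ in \eqref{eq:def-B-pi}), reduce its spectrum to the same per-eigenvalue quadratic $\omega^{2}-(1+\gamma)m_{k}\omega+\gamma m_{k}=0$, invoke reversibility to place $Sp(\bm{L}_{\pi})$ in $[-1,1]$, and conclude $\rho=1-\sqrt{1/\kappa}$ with the maximum attained at the Perron eigenvalue $\mu=1$. The only difference is a refinement: you make the block-diagonalization rigorous via the $\bm{D}^{1/2}$-symmetrization and identify the defective Jordan block at the Perron mode, yielding the sharper bound $O\left(s\,\rho^{s}\right)$, whereas the paper reaches the stated $o\left((\rho+\eta)^{s}\right)$ form through the generic spectral-radius fact of Lemma \ref{lem:rho-petit-o}.
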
 
 We first analyze the structure of the iterations in Algorithm \ref{alg:AVC}. Let $\bm{I} \in \R^{n \times n}$ be the identity matrix and $ \bm{B}_{\pi}  \in \R^{2n \times 2n}$ defined as
\begin{equation}\label{eq:def-B-pi}
 \bm{B}_{\pi} = \left[ {\begin{array}{cc}
(1-\sqrt{1/\kappa} ) \left( \bm{I} +\bm{L}_{\pi}\right)  &- (1-\sqrt{1/\kappa})^{2}/2\left( \bm{I} +  \bm{L}_{\pi} \right) \\
    \bm{I}  & \bm{0}
  \end{array} } \right].
\end{equation}
\begin{lemma}\label{lem:LTV-AVC} 
Let $\bm{x}^{\pi} = (\bm{v}^{\pi},\bm{v}^{\pi}) \in \R^{2 n},\bm{x}_{s} = (\bm{v}_{s},\bm{v}_{s-1}) \in \R^{2 n}$. Then $$\bm{x}_{s} = \bm{x}^{\pi}+ \bm{B}_{\pi}^{s-1}\left(\bm{x}_{1}-\bm{x}^{\pi}\right), \forall \; s \geq 1.$$
\end{lemma}
\begin{proof}
Let $(\bm{v}_{s})_{s \geq 0}$ be the sequence of iterates of Algorithm \ref{alg:AVC}. We have
\begin{align*}
\bm{v}_{s+1} & = (1-\alpha) \bm{h}_{s} + \alpha T_{\pi}(\bm{h}_{s}) \\
&= (1-\alpha)(1+\gamma)\bm{v}_{s} - (1- \alpha) \gamma \bm{v}_{s-1} + \alpha (1+\gamma) \lambda \bm{L}_{\pi}\bm{v}_{s} - \alpha \gamma \lambda \bm{L}_{\pi}\bm{v}_{s-1} + \alpha \bm{r}_{\pi} \\
& = \left( (1-\alpha)(1+\gamma) \bm{I} + \alpha (1+\gamma)\lambda \bm{L}_{\pi} \right) \bm{v}_{s}
  - \left( (1-\alpha)\gamma \bm{I} + \alpha \gamma \lambda \bm{L}_{\pi} \right) \bm{v}_{s-1} + \alpha \bm{r}_{\pi}.
\end{align*}
Recall that $\alpha = 1/(1+\lambda),\gamma = (1-\sqrt{1-\lambda^{2}})/\lambda$ and that $\kappa = (1+\lambda)/(1-\lambda).$ Then
\begin{align*}
1 + 1 / \kappa - 2 \alpha&  = 0, \\
(1-\alpha)(1+\gamma)  = \alpha \lambda  (1+\gamma ) & = 1 - \sqrt{1 / \kappa}, \\
\alpha \gamma \lambda  = (1-\alpha)\gamma  = \alpha - \sqrt{1 / \kappa} & = \dfrac{1}{2} \left(1 - \sqrt{1 / \kappa}\right)^{2}.
\end{align*}
Therefore, for $\bm{b}_{\pi} = \left(\bm{r}_{\pi},\bm{r}_{\pi} \right)$,
\begin{equation}\label{eq:rec-x-AVC}
\bm{x}_{s+1} = \bm{B}_{\pi}\bm{x}_{s} + \bm{b}_{\pi}.
\end{equation}
Note that $\bm{x}^{\pi}=\bm{B}_{\pi}\bm{x}^{\pi}+\bm{b}_{\pi}$. Therefore, for $s \geq 0$,
$$ \bm{x}_{s+1} - \bm{x}^{\pi} = \bm{B}_{\pi}\bm{x}_{s}+\bm{b}_{\pi} - \bm{B}_{\pi}\bm{x}^{\pi}-\bm{b}_{\pi} = \bm{B}_{\pi}\left( \bm{x}_{s}-\bm{x}^{\pi}\right).$$
From this we conclude 
$\bm{x}_{s} = \bm{x}^{\pi}+ \bm{B}_{\pi}^{s-1}\left(\bm{x}_{0}-\bm{x}^{\pi}\right), \forall \; s \geq 1.$
\end{proof}
In the next lemma we bound the spectral radius of $\bm{B}_{\pi}$.
\begin{lemma}\label{lem:radius} Consider a policy $\pi$ which defines an irreducible, reversible Markov chain on the set of states $\X$. Then we have
\[\rho(\bm{B}_{\pi}) = 1 - \sqrt{1/\kappa}.\]
\end{lemma}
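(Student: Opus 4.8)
The plan is to exploit the reversibility of the Markov chain to reduce the $2n \times 2n$ spectral problem to a collection of scalar, $2 \times 2$ problems indexed by the eigenvalues of $\bm{L}_\pi$, and then maximize the resulting modulus over the spectrum of $\bm{L}_\pi$. Write $c = 1 - \sqrt{1/\kappa}$ for brevity, so that $\bm{B}_\pi$ is built entirely from the blocks $c(\bm{I}+\bm{L}_\pi)$, $-\tfrac{c^2}{2}(\bm{I}+\bm{L}_\pi)$, $\bm{I}$, and $\bm{0}$. The first step is to record that, since $\pi$ induces an irreducible reversible chain with stationary distribution $\bm{\nu}$, the detailed balance equation of Definition \ref{def:reversible} makes $\bm{D}^{1/2}\bm{L}_\pi\bm{D}^{-1/2}$ symmetric, where $\bm{D}=\mathrm{diag}(\bm{\nu})$. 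Hence $\bm{L}_\pi$ is diagonalizable with real eigenvalues $\mu_1,\dots,\mu_n$; being stochastic, these lie in $[-1,1]$, and by irreducibility (Perron--Frobenius) the eigenvalue $\mu=1$ is present and simple.

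Next I would diagonalize the bottom-right structure of $\bm{B}_\pi$. Writing $\bm{L}_\pi = \bm{Q}\bm{\Lambda}\bm{Q}^{-1}$ with $\bm{\Lambda}=\mathrm{diag}(\mu_i)$ and conjugating $\bm{B}_\pi$ by $\mathrm{diag}(\bm{Q},\bm{Q})$ leaves a matrix all of whose four $n\times n$ blocks are diagonal; a single permutation similarity then regroups coordinates $i$ and $n+i$, turning $\bm{B}_\pi$ into a block-diagonal matrix with $2\times 2$ diagonal blocks
\[
\bm{B}_{\mu_i} = \left[\begin{array}{cc} c(1+\mu_i) & -\tfrac{c^2}{2}(1+\mu_i) \\ 1 & 0 \end{array}\right].
\]
Consequently the spectrum of $\bm{B}_\pi$ is exactly the union of the spectra of the $\bm{B}_{\mu_i}$, and $\rho(\bm{B}_\pi)=\max_i \rho(\bm{B}_{\mu_i})$.

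For a fixed eigenvalue $\mu\in[-1,1]$, the block $\bm{B}_\mu$ has trace $c(1+\mu)$ and determinant $\tfrac{c^2}{2}(1+\mu)$, so its characteristic polynomial is $t^2 - c(1+\mu)\,t + \tfrac{c^2}{2}(1+\mu)$, with discriminant $c^2(1+\mu)(\mu-1)\le 0$ throughout $[-1,1]$. Thus the two roots form a complex-conjugate pair (a double real root at the endpoints), and their common modulus equals $\sqrt{\det}=c\sqrt{(1+\mu)/2}$. This is strictly increasing in $\mu$, so the maximum over the spectrum is attained at the Perron eigenvalue $\mu=1$, where the block has the repeated real eigenvalue $c(1+1)/2 = c$. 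This shows $\rho(\bm{B}_{\mu_i})\le c$ for every $i$, with equality at $\mu_i=1$, whence $\rho(\bm{B}_\pi)=c=1-\sqrt{1/\kappa}$.

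The main obstacle is the reduction step: verifying that $\bm{B}_\pi$ really decouples into the $2\times 2$ blocks requires the real diagonalizability furnished by reversibility, and without it one would only get a bound via Jordan form rather than the exact value. A secondary subtlety is the Perron eigenvalue, where the block $\bm{B}_1$ has a \emph{repeated} eigenvalue $c$; for the spectral radius this is harmless (the modulus is still $c$), but it is precisely this defective block that forces the polynomial correction appearing as the $+\eta$ and the $o(\cdot)$ in the convergence statement of Theorem \ref{th:AVI-value-computation}, so I would flag it here rather than treat it silently.
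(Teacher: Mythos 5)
Your proof is correct and follows essentially the same route as the paper's: reduce the spectrum of $\bm{B}_{\pi}$ to the per-eigenvalue quadratic $\omega^{2} - c(1+\mu)\omega + \tfrac{c^{2}}{2}(1+\mu) = 0$ (with $c = 1-\sqrt{1/\kappa}$), use reversibility to place $Sp(\bm{L}_{\pi}) \subset [-1,1]$, and maximize the root modulus $c\sqrt{(1+\mu)/2}$ at $\mu = 1$. The one genuine refinement is that you justify the reduction step: the paper merely asserts that each eigenvalue $\mu$ of $\bm{L}_{\pi}$ yields (at most) two eigenvalues of $\bm{B}_{\pi}$, whereas your diagonalization (available precisely because reversibility makes $\bm{L}_{\pi}$ similar to a symmetric matrix) followed by a permutation similarity shows that the spectrum of $\bm{B}_{\pi}$ is \emph{exactly} the union of the $2\times 2$ block spectra, which is what is needed for the equality (rather than just an upper bound) claimed in the lemma.
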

\begin{proof}
{Let $\bm{y}=(\bm{y}_{1},\bm{y}_{2})$ an eigenvector of $\bm{B}_{\pi}$ and $\omega \in \C$ the corresponding eigenvalue. The equation $\bm{B}_{\pi}\bm{y} = \omega \bm{y}$ implies
\begin{align*}
(1-\sqrt{1/\kappa} ) \left( \bm{I} +\bm{L}_{\pi}\right) \bm{y}_{1} - \frac{(1-\sqrt{1/\kappa})^{2}}{2}\left( \bm{I} +  \bm{L}_{\pi} \right)\bm{y}_{2} & = 
 \omega \bm{y}_{1}, \\
\bm{y}_{1}  & = \omega \bm{y}_{2}.
\end{align*}
This leads to 
\[ \omega^{2}\bm{y}_{2} = (1-\sqrt{1/\kappa} ) \left( \bm{I} +\bm{L}_{\pi}\right) \omega \bm{y}_{2} -  \frac{(1-\sqrt{1/\kappa})^{2}}{2}\left( \bm{I} +  \bm{L}_{\pi} \right)\bm{y}_{2}.\]
Therefore,  there exists an eigenvalue $\mu$ in the spectrum of $\bm{L}_{\pi}$ such that $\omega$ satisfies the following equation:
}
\begin{equation}\label{eq:sec-order-nu-2}
  \omega^{2} - (\mu+1)(1-\sqrt{1 / \kappa})\omega + \dfrac{1}{2} (\mu+1)(1 - \sqrt{1 / \kappa})^{2} = 0.
  \end{equation}
  The discriminant of \eqref{eq:sec-order-nu-2} is $\Delta = (\mu^{2}-1) \cdot (1-\sqrt{1 / \kappa})^{2},$ which leads to the expression of the roots of \eqref{eq:sec-order-nu-2}:
  \begin{align}
  \omega^{+}(\mu) & = (1-\sqrt{1 / \kappa}) \cdot \dfrac{1}{2} \cdot \left( \mu+1 + \sqrt{\mu^{2}-1} \right),\label{eq:w_plus}\\
    \omega^{-}(\mu) & = (1-\sqrt{1 / \kappa}) \cdot \dfrac{1}{2} \cdot \left( \mu+1 - \sqrt{\mu^{2}-1} \right). \label{eq:w_minus}
  \end{align}
  { To prove $\rho(\bm{B}_{\pi}) = 1 - \sqrt{1/\kappa}$,  
  we now prove that for any eigenvalue $\mu$ of $\bm{L}_{\pi}$, \[|\omega^{+}(\mu)| \leq 1 - \sqrt{1/\kappa},  |\omega^{-}(\mu)| \leq 1 - \sqrt{1/\kappa}.\]  Let $Sp(\bm{L}_{\pi})$ the spectrum of the matrix $\bm{L}_{\pi}$. Based on \eqref{eq:w_plus}-\eqref{eq:w_minus}, it is enough to prove that for all $\mu \in Sp(\bm{L}_{\pi})$,
  \begin{align*}
  | \dfrac{1}{2} \cdot \left( \mu+1 + \sqrt{\mu^{2}-1} \right) | & \leq 1, \\
    |\dfrac{1}{2} \cdot \left( \mu+1 - \sqrt{\mu^{2}-1} \right)| & \leq 1.
  \end{align*}
  \[ \]
   First, since $\bm{L}_{\pi}$ is a stochastic matrix,  we have $ | \mu | \leq 1$.   Now $Sp(\bm{L}_{\pi}) \subset \R$ since the Markov chain defined by $\pi$ is reversible (see the beginning of Chapter 6, Section 2.1 in \cite{bremaud2013markov}).  This means that $Sp(\bm{L}_{\pi}) \subset [-1,1]$.}
 Now for $\mu \in [-1,1]$,
\[
\dfrac{1}{2} | \mu+1 + \sqrt{\mu^{2}-1} |  = \dfrac{1}{2} \sqrt{(\mu+1)^{2} + \left( \sqrt{1-\mu^{2}} \right)^{2} } 
= \dfrac{1}{2} \sqrt{2} \cdot \sqrt{1 + \mu}  \leq 1.
\]
Similarly, for $ \mu \in [-1,1],$ we have
$
\dfrac{1}{2} | \mu+1 - \sqrt{\mu^{2}-1} | \leq 1.
$
Therefore, the leading eigenvalue of $\bm{B}_{\pi}$ is attained for $\mu=1$, which leads to $\Delta=0$, and a maximum of eigenvalue of $\bm{B}_{\pi}$ equal to $ 1- \sqrt{1 / \kappa}.$
\end{proof}
\begin{remark}
Note that the reversibility of the Markov chain associated with $\pi$ is only a \textit{sufficient} condition for the spectral radius of $\bm{B}_{\pi}$ to be equal to $1- \sqrt{1/\kappa}$, but not necessary. In particular, another sufficient condition is $Sp(\bm{L}_{\pi}) \subset \R$.
\end{remark}
We are now ready to prove Theorem \ref{th:AVI-value-computation}.
\begin{proof}[Proof of Theorem \ref{th:AVI-value-computation}.]
From Lemma \ref{lem:LTV-AVC}, we can write $\bm{x}_{s} = \bm{x}^{\pi}+ \bm{B}_{\pi}^{s-1}\left(\bm{x}_{0}-\bm{x}^{\pi}\right), \forall \; s \geq 1.$ We have assumed that $\pi$ defines an irreducible, reversible Markov chain on the set of states $\X$. From Lemma \ref{lem:radius} we conclude that 
$\rho(\bm{B}_{\pi}) = 1 -\sqrt{1/\kappa}<1$. This implies that for any $\eta >0$, we have \[\lim_{s \rightarrow + \infty}\dfrac{1}{\left(1 -\sqrt{1/\kappa} + \eta \right)^{s}} \cdot \bm{B}_{\pi}^{s}\left(\bm{x}_{0}-\bm{x}^{\pi}\right) = 0.\]
We can therefore conclude that for any $\eta >0, \bm{v}_{s} = \bm{v}^{\pi} + o\left(\left(1 -\sqrt{1/\kappa} + \eta \right)^{s}\right), \forall \; s \geq 0.$

{Finally, note that $1-\sqrt{1/\kappa}$ is always smaller than $\lambda$ since
\begin{equation}\label{eq:comparison-rate}
 \lambda = 1 - \sqrt{1 / \kappa}+ \dfrac{\sqrt{1-\lambda} \cdot \left( 1-\sqrt{1-\lambda^{2}} \right) }{\sqrt{1+\lambda}}
\end{equation}
and $\lambda \in (0,1) \Rightarrow \sqrt{ 1-\lambda } \cdot \left( 1-\sqrt{1-\lambda^{2}} \right)  > 0$. }
\end{proof}
Therefore, we prove theoretical guarantees for the convergence of Algorithm \ref{alg:AVC}, under the assumption that the associated Markov chain is irreducible and reversible. In this case, Algorithm \ref{alg:AVC} converges geometrically at a rate of $1-\sqrt{1/\kappa}$, which coincidentally also corresponds to the known convergence rate for Nesterov's acceleration in the case of a strongly convex function (\cite{nesterov-book}, Algorithm 2.2.11).  
From Theorem \ref{th:AVI-value-computation} and the stopping criterion \eqref{eq:eps-approx-policy}, we can conclude that Algorithm \ref{alg:AVC} returns an $\epsilon$-approximation of $\bm{v}^{\pi}$ in a number of iterations in \[\mathcal{O} \left( n^{2} \cdot A \cdot \dfrac{1}{\sqrt{1-\lambda}}\cdot \log\left(\dfrac{1}{\epsilon \cdot (1-\lambda)}\right) \right),\]
which is significantly faster than \eqref{eq:rate-VI-eps-pol}, the convergence rate of Algorithm \ref{alg:VC}, for $\lambda$ approaching $1$.

{Even though our analysis in this section is based on the \textit{affine} property of the operator $T_{\pi}$ and restricted to a specific class of MDP instances, it provides important insights on the potential speedups achieved by accelerated algorithms.  Our analysis of \ref{alg:AVI}, where the Bellman operator $T$ is not affine, will be presented in Section \ref{sec:analysis_MAVI}.}

\subsubsection{Discussion on the irreducibility and reversibility assumption.}\label{sec:disc-reversibility}
We provide here an example of an MDP instance and a policy which do not define a reversible Markov chain and where Algorithm \ref{alg:AVC} may diverge. Consider an MDP with $n$ states where $n$ is even and one action for all states. The (only) transition matrix is defining a cycle, i.e., $L_{i,i+1}=1$ for $s \in \{1,...,n-1\}$ and $L_{n,1}=1.$ The eigenvalues of the transition matrix are the $n-th$ roots of $1$ in $\C$. 
   Now from \eqref{eq:w_plus}-\eqref{eq:w_minus}, we know that the spectral radius of the associated $\bm{B}_{\pi}$ is strictly higher than $1$ and therefore Algorithm \ref{alg:AVC} may diverge, since for $\mu=i \in Sp(\bm{L})$ we have
\[
\dfrac{1}{2}| \mu+1 + \sqrt{\mu^{2}-1} |  = | i + 1 + \sqrt{i^{2}-1}|  =\dfrac{1}{2}| i + 1 + \sqrt{-2} | = \dfrac{1}{2}\sqrt{4 + 2 \sqrt{2}} >1.
 \]
 \subsection{Analysis of Momentum Value Computation.}\label{sec:analysis-MVC}
In this section we provide a theoretical convergence rate for Algorithm \ref{alg:MVC} with the constants \eqref{eq:tuning-MVC-1}.  As the analysis is similar to \ref{alg:AVC},  we present a detailed proof in Appendix \ref{app:proof-section-MVC}.
\begin{theorem}\label{th:MVC-1}
Consider an MDP and a policy $\pi$ which defines an irreducible, reversible Markov chain on the set of states $\X$.
Consider $(\bm{v}_{s})_{s \geq 0}$ the sequence of iterates of Algorithm \ref{alg:MVC} with step sizes \eqref{eq:tuning-MVC-1}.
Then $(\bm{v}_{s})_{s \geq 0}$ converges to $\bm{v}^{\pi}$.
Moreover, for any $  \eta>0 $,
\[\bm{v}_{s} =\bm{v}^{\pi} + o \left( \left( \dfrac{1-\sqrt{1/\kappa}}{1+\sqrt{1/\kappa}} + \eta \right)^{s} \right), \forall \; s \geq 0.\]
Additionally,  we always have $0 < (1-\sqrt{1/\kappa})/(1+\sqrt{1/\kappa}) < \lambda$ for $\lambda \in (0,1)$, i.e., \ref{alg:MVC} enjoys better convergence guarantees than \ref{alg:VC}.
\end{theorem}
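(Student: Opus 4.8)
The plan is to mirror the proof of Theorem \ref{th:AVI-value-computation} step for step, replacing the accelerated recursion by the momentum recursion \eqref{alg:MVC}. First I would exploit that $T_{\pi}$ is affine, $T_{\pi}(\bm{v}) = \bm{r}_{\pi} + \lambda \bm{L}_{\pi}\bm{v}$, so that $\bm{v}_{s} - T_{\pi}(\bm{v}_{s}) = (\bm{I} - \lambda\bm{L}_{\pi})\bm{v}_{s} - \bm{r}_{\pi}$ and the M-VC update collapses to the linear recursion
\[
\bm{v}_{s+1} = \big((1+\beta-\alpha)\bm{I} + \alpha\lambda\bm{L}_{\pi}\big)\bm{v}_{s} - \beta\bm{v}_{s-1} + \alpha\bm{r}_{\pi},
\]
where $\alpha,\beta$ are the constants \eqref{eq:tuning-MVC-1} (the momentum parameter denoted $\gamma$ there). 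Lifting to $\bm{x}_{s} = (\bm{v}_{s},\bm{v}_{s-1}) \in \R^{2n}$ gives an affine system $\bm{x}_{s+1} = \bm{C}_{\pi}\bm{x}_{s} + \bm{c}_{\pi}$ with companion-type block matrix
\[
\bm{C}_{\pi} = \begin{bmatrix} (1+\beta-\alpha)\bm{I} + \alpha\lambda\bm{L}_{\pi} & -\beta\bm{I} \\ \bm{I} & \bm{0} \end{bmatrix}, \qquad \bm{c}_{\pi} = (\alpha\bm{r}_{\pi},\bm{0}).
\]
Exactly as in Lemma \ref{lem:LTV-AVC}, one checks that $\bm{x}^{\pi} = (\bm{v}^{\pi},\bm{v}^{\pi})$ is the unique fixed point of this affine map (using $T_{\pi}(\bm{v}^{\pi}) = \bm{v}^{\pi}$), so $\bm{x}_{s} - \bm{x}^{\pi} = \bm{C}_{\pi}^{s-1}(\bm{x}_{0} - \bm{x}^{\pi})$ for all $s \geq 1$, and everything reduces to bounding $\rho(\bm{C}_{\pi})$.

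The heart of the argument, analogous to Lemma \ref{lem:radius}, is this spectral radius computation. Seeking eigenvectors of $\bm{C}_{\pi}$ of the form $(\omega\bm{u},\bm{u})$ with $\bm{L}_{\pi}\bm{u} = \mu\bm{u}$, each eigenvalue $\mu \in Sp(\bm{L}_{\pi})$ contributes (at most) two eigenvalues $\omega$ of $\bm{C}_{\pi}$, namely the roots of
\[
\omega^{2} - \big(1 + \beta - \alpha(1-\lambda\mu)\big)\omega + \beta = 0.
\]
Writing $\theta = 1 - \lambda\mu$, this is precisely the heavy-ball characteristic equation $\omega^{2} - (1+\beta-\alpha\theta)\omega + \beta = 0$ with the optimal Polyak constants. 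I would then invoke reversibility: by Section 3.3 in \cite{revers-book} we have $Sp(\bm{L}_{\pi}) \subset \R$, and stochasticity gives $|\mu| \leq 1$, hence $\theta$ ranges in $[1-\lambda,1+\lambda]$. A direct computation shows that the tuning \eqref{eq:tuning-MVC-1} makes the \emph{linear} function $\theta \mapsto 1 + \beta - \alpha\theta$ equal to $+2\sqrt{\beta}$ at $\theta = 1-\lambda$ and $-2\sqrt{\beta}$ at $\theta = 1+\lambda$, so $|1+\beta-\alpha\theta| \leq 2\sqrt{\beta}$ throughout the interval. The discriminant is therefore nonpositive for every $\mu \in [-1,1]$, the two roots are complex conjugates (a double real root at the endpoints) whose product is $\beta$, and each has modulus exactly $\sqrt{\beta}$. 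Consequently $\rho(\bm{C}_{\pi}) = \sqrt{\beta} = (\sqrt{\kappa}-1)/(\sqrt{\kappa}+1)$.

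Finally, since $\kappa > 1$ we have $\rho(\bm{C}_{\pi}) = (\sqrt{\kappa}-1)/(\sqrt{\kappa}+1) < 1$, so $\bm{C}_{\pi}^{s-1}(\bm{x}_{0} - \bm{x}^{\pi}) \to \bm{0}$ and $\bm{v}_{s} \to \bm{v}^{\pi}$. For the rate, Gelfand's formula $\lim_{s}\|\bm{C}_{\pi}^{s}\|^{1/s} = \rho(\bm{C}_{\pi})$ yields, for every $\eta > 0$, $\|\bm{C}_{\pi}^{s}\| = o\big((\rho(\bm{C}_{\pi}) + \eta)^{s}\big)$, which absorbs any polynomial factor coming from possible Jordan blocks (all eigenvalues of $\bm{C}_{\pi}$ share the modulus $\sqrt{\beta}$). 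Projecting back onto the first block gives $\bm{v}_{s} = \bm{v}^{\pi} + o\big(((1-\sqrt{\kappa})/(1+\sqrt{\kappa}) + \eta)^{s}\big)$, as claimed.

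The main obstacle is the modulus bound in the second step. Unlike in smooth convex optimization, one cannot lean on an inner-product or first-order Taylor argument, and the clean rate $\sqrt{\beta}$ survives \emph{only} because reversibility forces $Sp(\bm{L}_{\pi}) \subset \R$: the product-of-roots argument delivers modulus $\sqrt{\beta}$ solely inside the region where the discriminant is nonpositive, and a complex eigenvalue off the real axis can push $|1+\beta-\alpha\theta|$ past $2\sqrt{\beta}$ and produce a root of modulus exceeding one, which is exactly the divergence exhibited by the cyclic instance \eqref{eq:L-counter-ex}. Verifying the endpoint identities $1+\beta-\alpha(1\mp\lambda) = \pm 2\sqrt{\beta}$ under the tuning \eqref{eq:tuning-MVC-1} is the one genuinely computational point, and is precisely where the specific choice of $\alpha,\beta$ enters.
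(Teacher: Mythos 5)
Your proposal is correct and follows essentially the same route as the paper's proof in Appendix \ref{app:proof-section-MVC}: lift the momentum recursion to the affine system $\bm{x}_{s+1}=\bm{B}_{\pi}\bm{x}_{s}+\bm{b}_{\pi}$ on $\R^{2n}$ (your $\bm{C}_{\pi}$ is exactly the paper's $\bm{B}_{\pi}$, since the tuning \eqref{eq:tuning-MVC-1} gives $1+\beta-\alpha=0$), reduce to the per-eigenvalue quadratic, use reversibility to force $Sp(\bm{L}_{\pi})\subset[-1,1]$ so the discriminant is nonpositive and every root has modulus $\sqrt{\beta}=\lambda/(1+\sqrt{1-\lambda^{2}})$, and convert the spectral radius bound into the $o\left(\left(\rho+\eta\right)^{s}\right)$ rate. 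The only cosmetic differences are that you compute the common modulus via the product-of-roots argument and invoke Gelfand's formula, where the paper writes the complex-conjugate roots explicitly and uses a rescaling lemma (Lemma \ref{lem:rho-petit-o}); these are equivalent.
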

In the case where the associated Markov chain is irreducible and reversible, Algorithm \ref{alg:MVC} converges geometrically at a rate of $(1-\sqrt{1/\kappa})/(1+\sqrt{1/\kappa})$. This rate coincides with the known convergence rate for Momentum Gradient Descent in the case of a twice differentiable, strongly convex function \citep{heavyball-2015}. We would like to note that we consider $(\bm{I}-T_{\pi})$ as the gradient of a function and that $\bm{v} \mapsto (\bm{I} - T_{\pi})(\bm{v})$ is itself a differentiable function. Moreover, $(1-\sqrt{1/\kappa})/(1+\sqrt{1/\kappa})$ is always strictly smaller than $\lambda$, the known convergence rate for Algorithm \ref{alg:VC}.
Using the stopping criterion \eqref{eq:eps-approx-policy}, we can conclude that Algorithm \ref{alg:MVC} with constants \eqref{eq:tuning-MVC-1} returns an $\epsilon$-approximation of $\bm{v}^{\pi}$ in a number of operations in 
$\mathcal{O} \left( n^{2} \cdot A \cdot \dfrac{1}{\sqrt{1-\lambda}}\cdot \log\left(\dfrac{1}{\epsilon \cdot (1-\lambda)}\right) \right).$
As for \ref{alg:AVC}, this is significantly faster than \eqref{eq:rate-VI-eps-pol}, the convergence rate of Algorithm \ref{alg:VC}, for $\lambda$ approaching $1$.

\vspace{1mm}
\noindent {\textbf{Comparison with Accelerated Value Computation.}
 Accelerated Value Computation converges linearly to $\bm{v}^{\pi}$ with a rate of $1-\sqrt{1/\kappa}$. Therefore, Momentum Value Computation with constants \eqref{eq:tuning-MVC-1} has slightly better convergence guarantees than Accelerated VC, namely a rate of $(1-\sqrt{1/\kappa})/(1+\sqrt{1/\kappa})$, a situation similar to the case of convex optimization~\citep{heavyball-2015}, { where Momentum GD achieves better convergence rate than Accelerated GD for instances that are at least twice differentiable and convex}.
 
\vspace{1mm}
\noindent {\textbf{Irreducibility and Reversibility Assumption.}
We show in Section \ref{sec:analysis_AVC} that on a non-reversible MDP instance, Algorithm \ref{alg:AVC} may diverge. For the same choice of non-reversible MDP instance, Algorithm \ref{alg:MVC} may also diverge. This highlights the key role of reversibility assumption in the convergence of Theorem \ref{th:MVC-1}. We present the detail about the divergence of \ref{alg:MVC} on this MDP instance in Appendix \ref{app:proof-section-MVC}.
\section{Analysis of Accelerated and Momentum Value Iteration.}\label{sec:analysis_MAVI}
{
In the previous section we have shown that acceleration and momentum may provide stronger convergence guarantees than classical algorithms,  for the special case of affine operators and reversible MDP instances. 
In this section, we analyze the structure of the iterations of \ref{alg:AVI} and \ref{alg:MVI}. While for \ref{alg:AVI} we are not able to give a theoretical convergence rate analogous to \ref{alg:AVC}, we highlight the challenges in the analysis, including a lower bound on the performance of any \textit{first-order} algorithm for solving MDPs. }
\subsection{Analysis of Accelerated Value Iteration.}\label{sec:analysis_AVI}
In this section we present our analysis of Algorithm \ref{alg:AVI}. 
\subsubsection{Structure of the iterations.} The Bellman operator $T$ is a piecewise affine operator. Therefore, at any step $s \geq 1$, the iterate $\bm{v}_{s+1}$ is some affine function (which may change from iteration to iteration) of $\R^{2 n}$ applied  to $\bm{x}_{s}=(\bm{v}_{s},\bm{v}_{s-1})$. This leads to the following proposition, which gives a Linear Time-Varying (LTV) dynamical system formulation for the evolution of the sequence $(\bm{v}_{s})_{s \geq 0}$ (e.g.,  Section 1.3 in \cite{jungers-book}). 
In particular, we have the following proposition. We give a detailed proof in Appendix \ref{app:prop-B-pi-IRU}.
\begin{proposition}\label{prop:B-pi-IRU} {Consider $(\bm{v}_{s})_{s \geq 0}$ the sequence of iterates of Algorithm \ref{alg:AVI} with step sizes \eqref{eq:tuning-1}.}
Let $\bm{x}^{*} = \left( \bm{v}^{*},\bm{v}^{*} \right) \in \R^{2n}$, where $\bm{v}^{*}$ is the value function of the optimal policy, and let $\bm{x}_{s} = \left( \bm{v}_{s},\bm{v}_{s-1}\right) \in \R^{2n}, s \geq 1.$
There exists a sequence of policies $(\hat{\pi}_{s})_{s \geq 0}$ such that
\begin{equation}\label{eq:LTV-AVI}
\bm{x}_{s} = \bm{x}^{*} +  \bm{B}_{\hat{\pi}_{s-1}} \cdot ... \cdot \bm{B}_{\hat{\pi}_{0}}(\bm{x}_{0}-\bm{x}^{*}), \forall \; s \geq 1.
\end{equation}
\end{proposition}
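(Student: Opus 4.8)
The plan is to mirror the derivation of Lemma~\ref{lem:LTV-AVC}, the essential new difficulty being that the relevant policy now changes from iteration to iteration. I would start by rewriting the A-VI update centered at $\bm{x}^{*}$. Since $\bm{v}^{*} = T(\bm{v}^{*})$, subtracting $\bm{v}^{*}$ from $\bm{v}_{s+1} = \bm{h}_{s} - \alpha(\bm{h}_{s} - T(\bm{h}_{s}))$ gives
\[
\bm{v}_{s+1} - \bm{v}^{*} = (1-\alpha)(\bm{h}_{s} - \bm{v}^{*}) + \alpha\left( T(\bm{h}_{s}) - T(\bm{v}^{*}) \right),
\]
so everything hinges on expressing the increment $T(\bm{h}_{s}) - T(\bm{v}^{*})$ as a linear map applied to $\bm{h}_{s} - \bm{v}^{*}$.

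The key step is to produce, for each $s$, a policy $\hat{\pi}_{s}$ with $T(\bm{h}_{s}) - T(\bm{v}^{*}) = \lambda \bm{L}_{\hat{\pi}_{s}}(\bm{h}_{s} - \bm{v}^{*})$. I would obtain this coordinate by coordinate from the piecewise-affine structure of $T$ in~\eqref{eq:T_max-def}. Fix a state $i$, let $a$ attain the maximum in $T(\bm{h}_{s})_{i}$ and let $b$ attain it in $T(\bm{v}^{*})_{i}$. Using that $a$ is suboptimal at $\bm{v}^{*}$ and $b$ is suboptimal at $\bm{h}_{s}$, one sandwiches the scalar $T(\bm{h}_{s})_{i} - T(\bm{v}^{*})_{i}$ between $\lambda\bm{P}_{ib}^{\top}(\bm{h}_{s} - \bm{v}^{*})$ and $\lambda\bm{P}_{ia}^{\top}(\bm{h}_{s} - \bm{v}^{*})$. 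Hence this increment is a convex combination of the two, which defines a (randomized) distribution $\hat{\pi}_{s,i}$ over $\{a,b\} \subseteq \A$ with $T(\bm{h}_{s})_{i} - T(\bm{v}^{*})_{i} = \lambda \sum_{a'} \hat{\pi}_{s,ia'}\bm{P}_{ia'}^{\top}(\bm{h}_{s} - \bm{v}^{*})$. Collecting these rows yields the stochastic matrix $\bm{L}_{\hat{\pi}_{s}}$ and the desired identity.

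With this linearization the derivation closes mechanically. Substituting gives $\bm{v}_{s+1} - \bm{v}^{*} = \left( (1-\alpha)\bm{I} + \alpha\lambda\bm{L}_{\hat{\pi}_{s}} \right)(\bm{h}_{s} - \bm{v}^{*})$, and writing $\bm{h}_{s} - \bm{v}^{*} = (1+\gamma)(\bm{v}_{s} - \bm{v}^{*}) - \gamma(\bm{v}_{s-1} - \bm{v}^{*})$ expresses the left-hand side linearly in $(\bm{v}_{s} - \bm{v}^{*}, \bm{v}_{s-1} - \bm{v}^{*})$. I would then invoke the exact coefficient identities already recorded in the proof of Lemma~\ref{lem:LTV-AVC}, namely $(1-\alpha)(1+\gamma) = \alpha\lambda(1+\gamma) = 1-\sqrt{1/\kappa}$ and $(1-\alpha)\gamma = \alpha\gamma\lambda = (1-\sqrt{1/\kappa})^{2}/2$, to recognize the resulting block matrix acting on $\bm{x}_{s} - \bm{x}^{*}$ as precisely $\bm{B}_{\hat{\pi}_{s}}$ from~\eqref{eq:def-B-pi}. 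This yields the one-step recursion $\bm{x}_{s+1} - \bm{x}^{*} = \bm{B}_{\hat{\pi}_{s}}(\bm{x}_{s} - \bm{x}^{*})$, and iterating it from the base case gives~\eqref{eq:LTV-AVI}.

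The main obstacle is the construction of $\hat{\pi}_{s}$ in the second step, and in particular the observation that it must be allowed to be \emph{randomized}. The naive choice of the greedy policy attaining the maximum in $T(\bm{h}_{s})$ reproduces $T(\bm{h}_{s}) = T_{\hat{\pi}_{s}}(\bm{h}_{s})$ but generally satisfies $T_{\hat{\pi}_{s}}(\bm{v}^{*}) < \bm{v}^{*}$, so $\bm{x}^{*}$ is \emph{not} a fixed point of the associated affine map and the clean homogeneous recursion fails. What makes the argument work is that we never need $\hat{\pi}_{s}$ to be greedy at $\bm{h}_{s}$; we only need it to linearize the \emph{difference} $T(\bm{h}_{s}) - T(\bm{v}^{*})$, and the intermediate-value/convex-combination argument above guarantees such a policy exists once convex combinations of actions are permitted. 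Note that, unlike in Lemma~\ref{lem:LTV-AVC}, the policies $\hat{\pi}_{s}$ depend on the unknown $\bm{v}^{*}$ and vary across iterations, so the statement is structural --- it exhibits the Linear Time-Varying form needed to bring in the joint spectral radius --- rather than algorithmic.
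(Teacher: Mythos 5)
Your proof is correct and takes essentially the same route as the paper's: your coordinate-wise sandwich of $T(\bm{h}_{s}) - T(\bm{v}^{*})$ between the linearization at the optimal action (lower) and at the greedy action for $\bm{h}_{s}$ (upper), followed by the intermediate-value construction of a randomized mixing policy, is exactly the paper's inequality $\bm{B}_{\pi^{*}}\bm{u}_{s} \leq \bm{u}_{s+1} \leq \bm{B}_{\pi_{s}}\bm{u}_{s}$ and row-wise mixture $\hat{\pi}_{s,ia} = \mu_{i}\pi^{*}_{ia} + (1-\mu_{i})\pi_{s,ia}$, only carried out on $\R^{n}$ before lifting to $\R^{2n}$ rather than after. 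You also correctly identify the same key subtlety the paper exploits, namely that the linearizing policy must be allowed to be randomized and cannot simply be the greedy policy at $\bm{h}_{s}$.
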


The dynamical system \eqref{eq:LTV-AVI} is said to be \textit{stable} if $(\bm{x}_{s})_{s \geq 0}$ converges to $\bm{0}$  for \textit{any} sequence of policies $(\hat{\pi}_{\ell})_{\ell \geq 0}$ and for any initial condition $\bm{x}_{0}$. Proposition \ref{prop:B-pi-IRU} highlights the key role played by the spectral radius of each matrix $\bm{B}_{\pi}$ and the \textit{joint spectral radius} of the set $\B = \{ \bm{B}_{\pi} \; | \; \pi \in \Pi \}$. The joint spectral radius of the set $\B$ is defined (among other equivalent definitions) as
\[ \rho(\B) = \limsup_{s \rightarrow \infty} \max \left\lbrace  \rho \left(\bm{B}_{\pi_{s}} \cdot ... \cdot \bm{B}_{\pi_{1}} \right)^{1/s} \; | \; \pi_{1},...,\pi_{s} \in \Pi \right\rbrace. \]
In particular, the sequence $(\bm{x}_{s})_{s \geq 0}$ is stable if and only if $\rho(\B) <1$ (Corollary 1.1 in \cite{jungers-book}). 

The authors in \cite{blondel-NP-hard} show that it is NP-hard to approximate the joint spectral radius for general set of matrices. The computation of $\rho(\mathcal{E})$ is known to be tractable for some special cases of set of matrices $\mathcal{E}$. In particular, if all the matrices in $\mathcal{E}$ are normal, or commonly triangularizable (Section 2.3.2 in \cite{jungers-book}), or row-wise nonnegative matrices (\cite{blondel-IRU}), it holds that $\rho(\mathcal{E})=\max_{\bm{E} \in \mathcal{E}} \rho(\bm{E}).$ This highlights the role of the spectral radius of each of the matrix $\bm{B}_{\pi}$. Therefore, one approach to study the convergence of Algorithm \ref{alg:AVI} would be to show a bound on the joint spectral radius of the set $\B$, the set of affine operators driving the dynamics of \eqref{eq:LTV-AVI}.
However, without any assumption, it may not hold that $\rho(\B) < 1$ (see our example in Section \ref{sec:disc-reversibility}). Even if we assume that the Markov chain associated with each policy $\pi$ is irreducible and reversible, we would only have $\rho(\bm{B}_{\pi})<1$, for all $\pi$ (see Lemma \ref{lem:radius}). However, this does not imply that $\rho(\B)<1$ (see Example 1.1 in~\cite{jungers-book}). While such a bound on the joint spectral radius is sufficient to prove convergence of Algorithm \ref{alg:AVI} for the Bellman operator $T$, it may not be a \textit{necessary} condition to prove the convergence of the sequence $(\bm{v}_{s})_{s \geq 0}$. In particular, there may be some more structure in the sequence of policies $(\hat{\pi}_{s})_{s \geq 0}$ of Proposition \ref{prop:B-pi-IRU}.  {In our numerical study (see Section \ref{sec:simu}), \ref{alg:AVI} always converges for a large class of MDP instances and exhibits significantly faster convergence times than classical approaches. We will present in Section \ref{sec:safe-AVI} an algorithm (Safe Accelerated Value Iteration, see Algorithm \ref{alg:SAVI}) which exhibits the strong empirical performances of \ref{alg:AVI} along with some convergence guarantees.}
\begin{remark} A similar analysis as for Proposition \ref{prop:B-pi-IRU} shows that the dynamics of the iterates of Momentum Value Iteration can also be modeled as a LTV dynamical system.  However, we notice in our numerical study (Section \ref{sec:simu}) that \ref{alg:MVI} may diverge on some of the instances that we considered. This is also the case in convex optimization, where \cite{heavyball-2015} give an example of an instance where momentum gradient descent may diverge when the objective function is only differentiable once. 
\end{remark}

\subsection{A family of hard MDP instances.}\label{sec:hard-instance} In light of the limitations of Algorithm \ref{alg:AVI} and \ref{alg:MVI}, we present a lower bound on the class of `first-order' iterative algorithms for MDP. We first recall the analogous results for convex optimization.
In optimization, a \textit{first-order algorithm} minimizing a differentiable, $\mu$-strongly convex, $L$-Lipschitz function $f: \R^{n} \rightarrow \R$ satisfies the following condition on the sequence of iterates $(\bm{x}_{s})_{s \geq 0}$: \[ \bm{x}_{s+1} \in\bm{x}_{0} + span \{ \nabla f (\bm{x}_{0}),...,\nabla f (\bm{x}_{s}) \}, \forall \; s \geq 0.\] \cite{nesterov-book} provides lower bounds on the convergence rate of any first-order algorithm on the class of smooth, convex functions and on the class of smooth, strongly-convex functions.  In particular, Nesterov's A-GD achieves the optimal rate of convergence over the class of smooth, convex functions, as well as over the class of smooth, strongly-convex functions. 
The proof relies on designing a hard instance.

Given our interpretation of $\left( \bm{I} -T \right)(\bm{v})$ as a gradient, in our MDP setting we consider \textit{first-order algorithm} as any iterative method where the sequence of iterates $(\bm{v}_{s})_{s \geq 0}$ satisfies 
\[ \bm{v}_{0}=\bm{0}, \bm{v}_{s+1} \in span \{\bm{v}_{0},...,\bm{v}_{s}, T(\bm{v}_{0}), ... T(\bm{v}_{s}) \}, s \geq 0.\]
We prove the following theorem, which present a lower bound on the convergence rate of first-order algorithms in all generality.
\begin{theorem}\label{th:hard-instance} There exists an MDP instance $(\X,\A,\bm{P},\bm{r},\bm{p}_{0},\lambda)$ such that for any sequence of iterates satisfying \[ \bm{v}_{0}=\bm{0}, \bm{v}_{s+1} \in span \{\bm{v}_{0},...,\bm{v}_{s}, T(\bm{v}_{0}), ... T(\bm{v}_{s}) \}, s \geq 0,\]
the following lower bound holds for any step $s \in \{1,...,n-1\}$: \[\| \bm{v}_{s} - \bm{v}^{*} \|_{\infty} \geq \dfrac{\lambda^{s}}{1+\lambda}.\]
\end{theorem}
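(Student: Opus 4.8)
The plan is to build an explicit single-action MDP whose Bellman operator is affine and acts as a one-step ``shift,'' so that the admissible subspace available to any first-order method grows by exactly one coordinate per iteration. This mirrors Nesterov's Krylov-subspace lower-bound argument recalled above, but transported to the $\| \cdot \|_{\infty}$ / fixed-point setting where $\bm{I} - T$ plays the role of the gradient. First I would fix the instance and then isolate the sparsity invariant as the engine of the proof.

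Concretely, take $n$ states and a single action, so that $T(\bm{v}) = \bm{r} + \lambda \bm{L}\bm{v}$ is affine (this avoids the maximum, which is what makes the general Bellman operator awkward for a span argument). Choose the deterministic transition matrix $\bm{L}$ with $L_{1,1}=1$ and $L_{i,i-1}=1$ for $i \geq 2$ (state $1$ self-loops, every other state moves one step down), which is row-stochastic, and a reward supported on the first state, $r_{1}=c$ and $r_{i}=0$ for $i\geq 2$, for a normalizing constant $c$ fixed at the end. Then $T(\bm{v})_{1}=c+\lambda v_{1}$ and $T(\bm{v})_{i}=\lambda v_{i-1}$ for $i\geq 2$, and solving $\bm{v}^{*}=T(\bm{v}^{*})$ gives the geometric value vector $v^{*}_{i}=c\lambda^{i-1}/(1-\lambda)$.

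The crux is a support-growth lemma. Let $E_{k}=\mathrm{span}\{\bm{e}_{1},\ldots,\bm{e}_{k}\}$. Because $\bm{r}\in E_{1}$ and $\bm{L}(E_{k})\subseteq E_{k+1}$ by the shift structure (the $i$-th coordinate of $\bm{L}\bm{v}$ reads from index $i-1$), one has $T(E_{k})\subseteq E_{k+1}$. I would then prove by induction on $s$ that every iterate satisfies $\bm{v}_{s}\in E_{s}$: the base case is $\bm{v}_{0}=\bm{0}$, and if $\bm{v}_{0},\ldots,\bm{v}_{s}\in E_{s}$ then $T(\bm{v}_{j})\in E_{j+1}\subseteq E_{s+1}$ for all $j\leq s$, so the entire set $\{\bm{v}_{0},\ldots,\bm{v}_{s},T(\bm{v}_{0}),\ldots,T(\bm{v}_{s})\}$ lies in $E_{s+1}$, forcing $\bm{v}_{s+1}\in E_{s+1}$. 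The essential point is that it is the whole admissible subspace that is confined to $E_{s+1}$, so the conclusion holds no matter which vector the algorithm selects inside the span.

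To finish, for $s\leq n-1$ the coordinate $s+1$ is well defined and every admissible $\bm{v}_{s}$ has $(\bm{v}_{s})_{s+1}=0$, hence $\|\bm{v}_{s}-\bm{v}^{*}\|_{\infty}\geq |v^{*}_{s+1}| = c\lambda^{s}/(1-\lambda)$; choosing $c=(1-\lambda)/(1+\lambda)$ yields the claimed bound $\lambda^{s}/(1+\lambda)$, and one checks directly that plain value iteration attains the same error at that coordinate, which is what justifies the claim that no first-order method beats VI in the transient regime $s<n$. I expect the main obstacle to be the precise formulation and verification of the support-growth invariant — specifically arguing that the full Krylov-type subspace (rather than merely the VI trajectory) stays inside $E_{s+1}$, which is exactly what makes the lower bound uniform over all first-order algorithms; keeping $T$ affine via a single action is the device that keeps this argument clean.
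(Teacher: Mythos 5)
Correct, and essentially the paper's own argument: you construct the same hard instance (a directed chain feeding into an absorbing state that carries the only reward) and prove the same key invariant by induction, namely that any admissible first-order iterate satisfies $v_{s,i}=0$ for all $i\geq s+1$, so the reward cannot back-propagate faster than one state per iteration. The only divergence is the endgame — you rescale the reward to $c=(1-\lambda)/(1+\lambda)$ and read the bound directly off coordinate $s+1$, whereas the paper keeps $r_{1}=1$ and converts through $\|\bm{v}_{s}-T(\bm{v}_{s})\|_{\infty}$ using \eqref{eq:mu-infty} and a triangle inequality; your direct reading is in fact slightly sharper (with $r_{1}=1$ it gives $\lambda^{s}/(1-\lambda)$).
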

\begin{proof}
We consider the following MDP. The discount factor is any $\lambda \in (0,1)$, there are $n $ states and one action. The rewards are such that $r_{1}=1$ and $r_{i}=0$ for all other states $i$. The state $1$ is absorbing and for $i \geq 2$, there is a deterministic transition from $i$ to $i-1$. 
\begin{figure}[H]
\centering 
\includegraphics[scale=0.25]{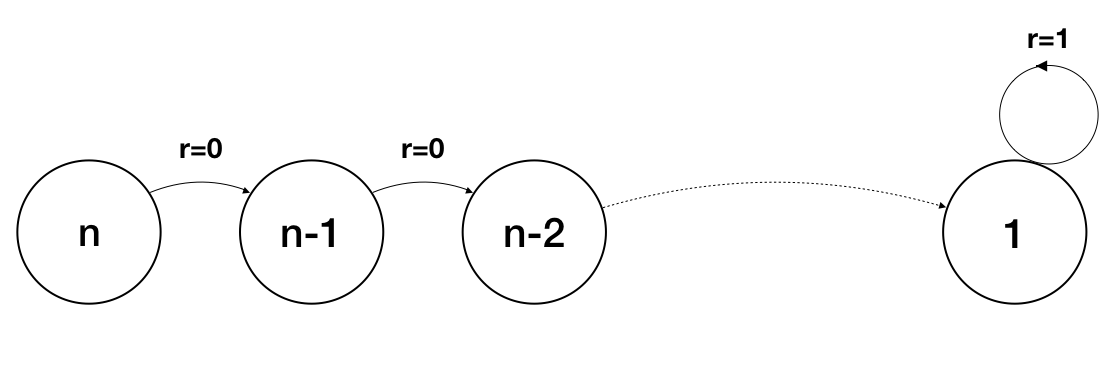}
\caption{An MDP where there are $n$ states and one action. The state $1$ is absorbing, $r_{1}=1$ and there is no reward in any other state. The arrows represent deterministic transitions.}
\label{fig:hard_instance}
\end{figure}

It is easy to see that the (optimal) value function is $v^{*}_{i}=\lambda^{i-1}/(1-\lambda), i \in \X.$
Let us consider a sequence of vectors $(\bm{v}_{s})_{s \geq 0}$ such that $\bm{v}_{0}=\bm{0}$ and
\[\bm{v}_{s} \in span \{ \bm{v}_{0}, ..., \bm{v}_{s-1}, T(\bm{v}_{0}), ... T(\bm{v}_{s-1}) \}, s \geq 0.\]
We prove by recursion that for all $s \geq 0, i \in \X$, we have $v_{s,i}=0$ if $i \geq s+1.$ This is true for $s=0$ because $\bm{v}_{0}=\bm{0}$. Let us assume that this is true for $v_{0},...,v_{s-1}$. Then given the definition of $T$ as in \eqref{eq:T_max-def} and the fact that $r_{i}=0$ for $i \geq 2$ in the MDP of Figure \ref{fig:hard_instance}, we have $T(\bm{v}_{t})_{i} = 0$ if $i \geq t+2$, for all $t \leq s-1$. Therefore, from 
$ \bm{v}_{s} \in span \{ \bm{v}_{0}, ..., \bm{v}_{s-1}, T(\bm{v}_{0}), ... T(\bm{v}_{s-1}) \}$ we see that $v_{s,i} = 0,$ for $i \geq s+1$, and we proved our recursion. 
The state $1$ is the only state where the decision-maker earns a reward; note that we essentially proved that any first-order algorithm takes at least $s-1$ steps to back-propagate the reward from state $1$ towards a state $1 \leq s \leq n$.

Now we have, for $1 \leq s \leq n-1$,
\begin{align}
\| \bm{v}_{s} - T(\bm{v}_{s}) \|_{\infty} & = \| \bm{v}_{s} - T(\bm{v}_{s}) - (\bm{v}^{*} - T(\bm{v}^{*})) \|_{\infty} \label{ineq:1} \\
& \geq (1-\lambda) \| \bm{v}_{s} - \bm{v}^{*} \|_{\infty} \label{ineq:2} \\
& \geq (1-\lambda) \max_{ 1 \leq i \leq n} |v_{s,i} - v^{*}_{i} | \nonumber \\
& \geq (1-\lambda) \max_{ s+1 \leq i \leq n} |v_{s,i} - v^{*}_{i} | \nonumber \\
& \geq (1-\lambda) \max_{ s+1 \leq i \leq n} | v^{*}_{i}| \label{ineq:3} \\
& \geq (1-\lambda) \max_{ s+1 \leq i \leq n} \dfrac{\lambda^{i-1}}{1-\lambda} \nonumber  \\
& \geq \lambda^{s}, \nonumber 
\end{align}
where \eqref{ineq:1} follows from $\bm{v}^{*} = T(\bm{v}^{*})$, \eqref{ineq:2} follows from \eqref{eq:mu-infty} and \eqref{ineq:2}, and \eqref{ineq:3} follows from $v_{s,i}=0$ for $i \geq s+1$.
We can conclude since triangle inequality gives $\| \bm{v}_{s} - \bm{v}^{*} \|_{\infty} \geq (1/(1+\lambda)) \cdot \| \bm{v}_{s} - T(\bm{v}_{s}) \|_{\infty}.$
\end{proof}
{
Therefore, \ref{alg:VI} matches the lower bound on the convergence rate of first-order algorithms. } Since \ref{alg:AVI} is also a first-order algorithm, Algorithm \ref{alg:AVI} may not perform better than \ref{alg:VI} in intermediate steps, \textit{before} the $n$-th iteration. 
We illustrate this for our hard MDP instance in Figure \ref{fig:hard_instance_errors}. We notice that \ref{alg:VI} and \ref{alg:R-VI} converge linearly with a rate close to $\lambda$ (as we prove in Proposition \ref{prop:VI-step size}). However, Algorithm \ref{alg:AVI} starts to converge faster than \ref{alg:VI} only after (at least) $n$ iterations. This suggests that if one wants to compute an $\epsilon$-optimal policy for a large $\epsilon$ in a high-dimensional MDP with an analogous structure as our hard MDP instance, \ref{alg:VI} might be a faster algorithm for this particular instance. 
\begin{figure}[H]
\centering
\includegraphics[scale=0.5]{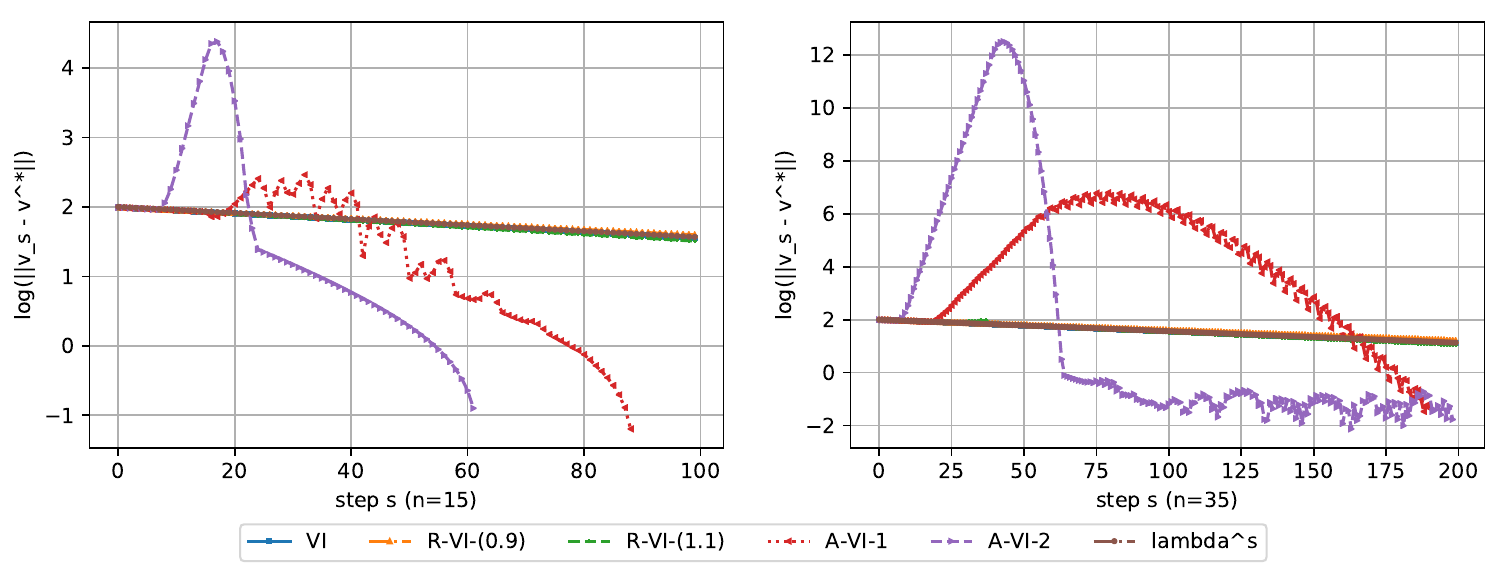}
\caption{We focus on the hard MDP instance of Figure \ref{fig:hard_instance}. We present the logarithm of the error $\| \bm{v}_{s} - \bm{v}^{*} \|_{\infty}$ for iterates of  \ref{alg:VI}, Algorithm \ref{alg:R-VI} for $\alpha=0.9$ and $\alpha=1.1$ (R-VI-(0.9) and R-VI-(1.1)), Algorithm \ref{alg:AVI} with the tuning \eqref{eq:tuning-1} (A-VI-1), Algorithm \ref{alg:AVI} with the more aggressive tuning $\alpha_{s}=\alpha=1,\gamma_{s}=\gamma=\left(1-\sqrt{1-\lambda}\right)^{2}/\lambda, \forall \; s \geq 1$, corresponding to $L=1, \mu=1-\lambda$ (A-VI-2). We also include $(\lambda^{s})_{s \geq 0}$, the rate of convergence of \ref{alg:VI}, for reference.}
\label{fig:hard_instance_errors}
\end{figure}
Finally, note that Algorithm \ref{alg:AVI} need not to be a \textit{descent algorithm}, i.e., it does not necessarily produce estimates that result in a monotonically decreasing objective function. The objective value might increase for a few periods, before significantly decreasing afterward (see Figure \ref{fig:hard_instance_errors}). This is analogous to \textit{oscillations} for accelerated gradient descent 
(see for instance Figure 1 in \cite{oscillation-2015} for a detailed study of the oscillation effect of A-GD).
{
\section{Balancing aggressive and safe algorithms: Safe Accelerated Value Iteration}\label{sec:safe-AVI}
In this section, we introduce an algorithm that balances the updates from \ref{alg:AVI} with updates from \ref{alg:VI}. The main motivation is that although accelerated algorithms may be too \textit{aggressive} for some MDP instances, we have seen in Section \ref{sec:analysis_AVC} that in the case of \ref{alg:AVC}, they can provide significant theoretical speedups for certain specific classes of MDP instances (e.g., reversible MDPs).  On the other hand,  \ref{alg:VI} is a \textit{safe} algorithm, that always converges regardless of the MDP instance,  even though the empirical convergence can be quite slow.  Therefore, we introduce Safe Accelerated Value Iteration (S-AVI,  see Algorithm \ref{alg:SAVI}), an algorithm which alternates between \ref{alg:AVI} updates and \ref{alg:VI} updates, depending of the progress made at the current iterate toward solving the equation $\bm{v} - T(\bm{v})=\bm{0}$ (see condition in Step \ref{alg:step:condition-SAVI} in Algorithm \ref{alg:SAVI}).
\begin{algorithm}[]
\caption{Safe Accelerated Value Iteration}\label{alg:SAVI}
\begin{algorithmic}[1]
\State\textbf{Initialize} $\lambda' \in [\lambda,1)$ and step sizes $\alpha,\gamma >0$.
\State\textbf{Initialize} $\bm{v}_{0}\in \R^{n},\bm{v}_{1}=T(\bm{v}_{0})$.
\For{each period $s \geq 1$}
 \State  Set
 \[
 \begin{cases}
    \bm{h}_{s}=\bm{v}_{s}+\gamma\cdot \left( \bm{v}_{s}-\bm{v}_{s-1} \right) \\
	\bm{v}_{s+1/2} \gets \bm{h}_{s}-\alpha \left( \bm{h}_{s}- T \left( \bm{h}_{s} \right) \right) \end{cases}\]
 
 \If {$\| \bm{v}_{s+1/2} - T(\bm{v}_{s+1/2}) \|_{\infty} \leq \lambda'^{s+1} \| \bm{v}_{0} - T(\bm{v}_{0})\|_{\infty}$} \label{alg:step:condition-SAVI}
 \State Set $\bm{v}_{s+1} = \bm{v}_{s+1/2}$.
\Else
\State Set $\bm{v}_{s+1} = T(\bm{v}_{s})$.
\EndIf  
\EndFor
\end{algorithmic}
\end{algorithm}
We give the convergence proof and the convergence rate of Algorithm \ref{alg:SAVI} in the next theorem.
\begin{theorem}\label{th:SAVI}
Consider an MDP instance and  $(\bm{v}_{s})_{s \geq 0}$ the sequence of iterates of Algorithm \ref{alg:SAVI} with scalar $\lambda' \in [\lambda,1)$ and any step sizes $\alpha,\gamma>0$.
Then $(\bm{v}_{s})_{s \geq 0}$ converges to $\bm{v}^{*}$.
Moreover,
 \[\bm{v}_{s} = \bm{v}^{*} + O\left(\lambda'^{s}\right), \forall \; s\geq 0.\]
\end{theorem} 
\begin{proof}
 Let $\left(\bm{v}_{s}\right)_{s \geq 0}$ be the output of Algorithm \ref{alg:SAVI}.  
We prove by induction that for any period $s \geq 0$, 
\[\| \bm{v}_{s} - T(\bm{v}_{s}) \|_{\infty} \leq \lambda'^{s} \| \bm{v}_{0} - T(\bm{v}_{0})\|_{\infty}.\]

This is straightforward for $s=0$. Also, this is true for $s=1$, because $\bm{v}_{1}=T(\bm{v}_{0})$ and $T$ is a contraction, so that
\[ \| \bm{v}_{1} - T(\bm{v}_{1}) \|_{\infty} = \|T( \bm{v}_{0}) - T(T(\bm{v}_{0})) \|_{\infty}  \leq \lambda \| \bm{v}_{0} - T(\bm{v}_{0})\|_{\infty} \leq \lambda' \| \bm{v}_{0} - T(\bm{v}_{0})\|_{\infty},\]
where the last inequality follows from $\lambda' \geq \lambda$.

Now suppose that for some $s \geq 1$ we have
\begin{equation}\label{eq:recursion}
\| \bm{v}_{s} - T(\bm{v}_{s}) \|_{\infty} \leq \lambda'^{s} \| \bm{v}_{0} - T(\bm{v}_{0})\|_{\infty}. 
\end{equation} We will show that 
\[ \| \bm{v}_{s+1} - T(\bm{v}_{s+1}) \|_{\infty} \leq \lambda'^{s+1} \| \bm{v}_{0} - T(\bm{v}_{0})\|_{\infty}.\]
This follows from the condition in Step \ref{alg:step:condition-SAVI}.
 Indeed, either the condition in Step \ref{alg:step:condition-SAVI} is not satisfied and we have $\bm{v}_{s+1} = T(\bm{v}_{s})$, which implies that
\[  \| \bm{v}_{s+1} - T(\bm{v}_{s+1}) \|_{\infty} \leq \lambda  \| \bm{v}_{s} - T(\bm{v}_{s}) \|_{\infty} \leq \lambda'  \| \bm{v}_{s} - T(\bm{v}_{s}) \|_{\infty} \leq \lambda'^{s+1} \| \bm{v}_{0} - T(\bm{v}_{0})\|_{\infty},\]
where the last inequality follows from the recursion hypothesis \eqref{eq:recursion}.

Otherwise, the condition in Step \ref{alg:step:condition-SAVI} is satisfied and we have \[ \| \bm{v}_{s+1} - T(\bm{v}_{s+1}) \|_{\infty} \leq \lambda'^{s+1}  \| \bm{v}_{0} - T(\bm{v}_{0}) \|_{\infty}.\]
Overall, we have proved by recursion that for all $s \in \N$, we have
$\| \bm{v}_{s} - T(\bm{v}_{s}) \|_{\infty} \leq \lambda'^{s}  \| \bm{v}_{0} - T(\bm{v}_{0}) \|_{\infty}.$

We now show that this is enough to prove Theorem \ref{th:SAVI}. We have
\[ \| \bm{v}_{s} - \bm{v}^{*} \|_{\infty} =  \| \bm{v}_{s} + T(\bm{v}_{s}) - T(\bm{v}_{s}) - \bm{v}^{*} \|_{\infty} \leq \lambda \| \bm{v}_{s} - \bm{v}^{*} \|_{\infty} +  \| \bm{v}_{s} + T(\bm{v}_{s})\|_{\infty}, \]
which in turn implies that
$ \| \bm{v}_{s} - \bm{v}^{*} \|_{\infty} \leq (1-\lambda)^{-1} \| \bm{v}_{s} - T(\bm{v}_{s} \|_{\infty}$. But we just proved that \[ \| \bm{v}_{s} - T(\bm{v}_{s}) \|_{\infty} = O \left( \lambda'^{s} \right).\]
Therefore,  $\| \bm{v}_{s} - \bm{v}^{*} \|_{\infty} = O \left( \lambda'^{s} \right)$, i.e., $\left( \bm{v}_{s} \right)_{s \geq 0}$ is a sequence of vectors converging to $\bm{v}^{*}$ and $\bm{v}_{s} = \bm{v}^{*} + O \left( \lambda'^{s} \right).$
\end{proof}
Several remarks are in order.
\begin{remark}[Rate of convergence and choice of $\lambda'$] S-AVI converges to $\bm{v}^{*}$ at a linear rate of $\lambda'$ for $\lambda' \in [\lambda,1)$.  This means that the theoretical convergence rate of Algorithm \ref{alg:SAVI} cannot be better than the rate of convergence of \ref{alg:VI}. However,  we have proved in Theorem \ref{th:hard-instance} that in all generality, no algorithm can achieve a faster convergence rate than \ref{alg:VI}, i.e., Algorithm \ref{alg:SAVI} with $\lambda'=\lambda$ matches the theoretical lower bound on the convergence rates of any first-order algorithm for MDPs.
Additionally, one can control the number of AVI steps and VI steps in Algorithm \ref{alg:SAVI} with the parameter $\lambda' \in [\lambda,1)$. Intuitively,  for $\lambda'$ closer to $1$, the condition in Step \ref{alg:step:condition-SAVI} is weaker and more AVI steps are taken than for $\lambda'$ close to $\lambda$.  Our analysis from Section \ref{sec:analysis_MAVC} shows that aggressive updates have the potential to benefits the empirical convergence of the algorithm,  and we will see in the numerical experiments that using $\lambda' = (1+\lambda)/2$ results in significant speedups compared to classical algorithms.
\end{remark}
\begin{remark}[Condition to activate aggressive updates]
Other conditions than the one presented in Step \ref{alg:step:condition-SAVI} of Algorithm \ref{alg:SAVI} could be used and would lead to the same convergence rate for Algorithm \ref{alg:SAVI}. In particular, one could use
\begin{equation}\label{eq:alt-condition-1}
\| \bm{v}_{s+1/2} - T(\bm{v}_{s+1/2}) \|_{\infty}  \leq \lambda' \| \bm{v}_{s} - T(\bm{v}_{s})\|_{\infty}.
\end{equation}
 This condition would also work because it holds when $\bm{v}_{s+1/2} = T(\bm{v}_{s})$, i.e.,  even when the algorithm takes a VI step.
Note that \eqref{eq:alt-condition-1} enforces progress at every period, i.e., from $\bm{v}_{s}$ to $\bm{v}_{s+1}$ at every period $s$.  However, we know that \ref{alg:AVI} may not be a descent algorithm, and this is the main reason why the condition chosen for Step \ref{alg:step:condition-SAVI},  $\| \bm{v}_{s+1/2} - T(\bm{v}_{s+1/2}) \|_{\infty} \leq \lambda'^{s+1} \| \bm{v}_{0} - T(\bm{v}_{0})\|_{\infty}$, only imposes progress of $\bm{v}_{s+1}$ compared to $\bm{v}_{0}$ and not compared to $\bm{v}_{s}$. Empirically, we have observed that S-AVI as presented in Algorithm \ref{alg:SAVI} performs better than with condition \eqref{eq:alt-condition-1} and almost always take A-VI updates instead of VI updates.  We present the details of our numerical experiment in our next section.
\end{remark}
\begin{remark}[Generalization: Safe Fixed-Point Iteration]
The idea of interweaving aggressive and safe steps is very general and has been used for instance in \cite{ref-a} in the case of \textit{Anderson acceleration}.  Algorithm \ref{alg:SAVI} can be considered with the operator $T_{\pi}$ instead of $T$,  or even for any contracting operator of rate $\lambda$.  Similarly, for the intermediary point $\bm{v}_{s+1/2}$,  one could consider other updates than \ref{alg:AVI}, e.g.,  one could consider \ref{alg:MVI} or even other aggressive methods.  Theorem \ref{alg:SAVI} would still hold, as the proof is independent of the choice of $\bm{v}_{s+1/2}$ and solely relies on the operator being a contraction of rate $\lambda$. We choose to focus on Safe A-VI in this section because we noticed empirically that it performs better than Safe Momentum Value Iteration (see details in the next section).
\end{remark}
}
\section{Numerical study.}\label{sec:simu}
{
We now present our numerical experiments. We compare S-AVI with four classical approaches, including Value Iteration, Policy Iteration~\citep{ye-2011}, Gauss-Seidel Value Iteration~\citep{Puterman} and Anderson Value Iteration~\citep{ref-c},  for solving both structured and random MDP instances.  These are state-of-the-art approaches that range from first-order methods for MDPs (VI, Gauss-Seidel VI) to second-order methods (as PI is analogous to Newton method) and quasi-Newton methods (Anderson VI is related to Anderson acceleration in convex optimization). 
The details about the MDP instances and our implementation of all algorithms can be found in Appendix \ref{app:details-numerical-exp}.}

{
\vspace{1mm}
\noindent \textbf{Experimental setup.}
All figures in this section present the logarithm of the running times (in second) of the algorithms before returning an $\epsilon$-optimal policy.
The simulations are performed on a laptop with 2.2 GHz Intel Core i7 and 8 GB of RAM.  We stop the algorithms when they are guaranteed to return an $\epsilon$-optimal solution with the condition $\| \bm{v}_{s} - T(\bm{v}_{s}) \|_{\infty} \leq \epsilon \cdot (1-\lambda)$. All algorithms are initialized at $\bm{v}_{0}=\bm{0}$, and when applicable, $\bm{v}_{1}=T(\bm{v}_{0})$. We choose $\epsilon=0.1$ in our simulations. For the simulations over random MDP instances (Garnet MDPs, Figure \ref{fig:garnet}), the performances are averaged over 10 randomly generated instances.  For S-AVI we choose $\lambda' = (1+\lambda)/2 \in (\lambda,1)$, which guarantees the convergence the convergence of the algorithm at a rate at least $\lambda'$. }

{\noindent \textbf{MDP instances.} 
We compare the performances of various algorithms on two structured MDP instances (\textit{forest} and \textit{healthcare} MDPs) and some random MDP instances (\textit{Garnet} MDPs).  In the first instance inspired from real-world application, the \textit{forest MDP}~\citep{pymdp}, a forest grows over the years and the decision maker balances the revenues associated with selling cut wood with the increasing risk of wildfire.  This is inspired from an application of dynamic programming to optimal fire management~\citep{possingham1997application}. The second structured instance, the \textit{healthcare MDP}, is a simplification of the Markov chains for modeling the dynamic evolution of the health of the patients as presented in \cite{Goh} and \cite{grand2020robust}. For our experiments with randomly generated MDPs, we use the class  of Generalized Average Reward Non-stationary Environment Test-bench (Garnet MDPs), introduced in  \cite{garnet}. 
 Garnet MDPs are a class of abstract but representative  MDPs. In particular, we can control the connectivity of the underlying Markov chain with a \textit{branching} factor, $n_{\sf branch}$, which represents the proportion of next states available at every state-action pair $(i,a)$. Garnet MDPs serve as a test-bench for RL algorithms \citep{garnet-1,garnet-2,garnet-3}.  We present the details about each MDP instance in Appendix \ref{app:details-MDP-instances}. In our experiment we choose $n_{\sf branch} = 80 \%$ and $A=50$, to compare the algorithms on instances denser than the forest and healthcare MDPs.
}
\begin{figure}[h!]
\includegraphics[width=0.9\linewidth]{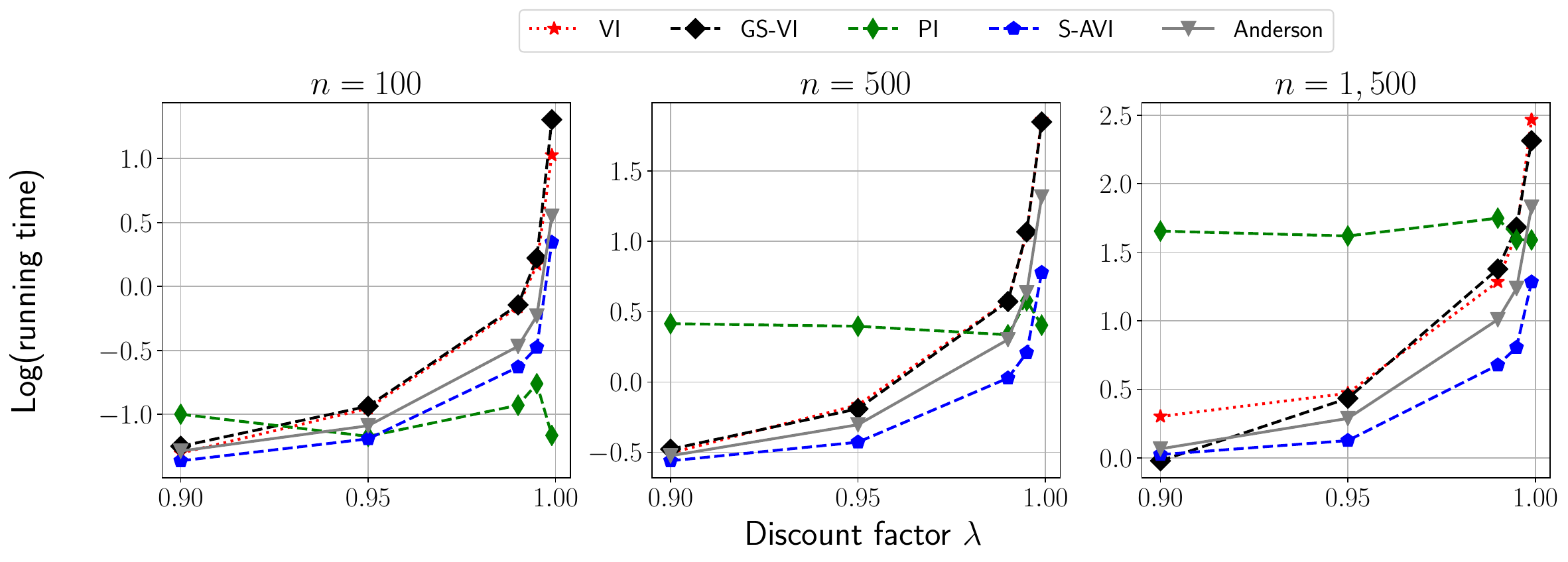}
\caption{Logarithm of the running times of various algorithms for solving the forest MDP instance.}
\label{fig:forest}
\end{figure}
\begin{figure}[h!]
\includegraphics[width=0.9\linewidth]{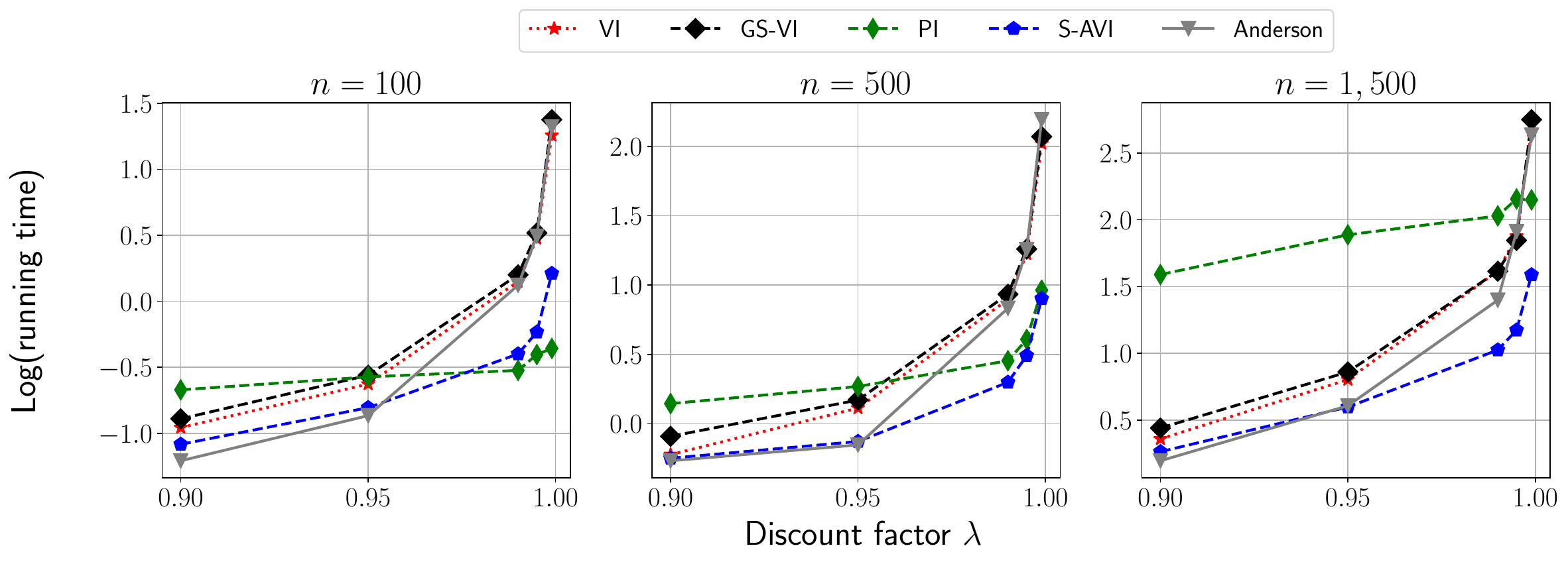}
\caption{Logarithm of the running times of various algorithms for solving the healthcare MDP instance.}
\label{fig:healthcare}
\end{figure}
\begin{figure}[h!]
\includegraphics[width=0.9\linewidth]{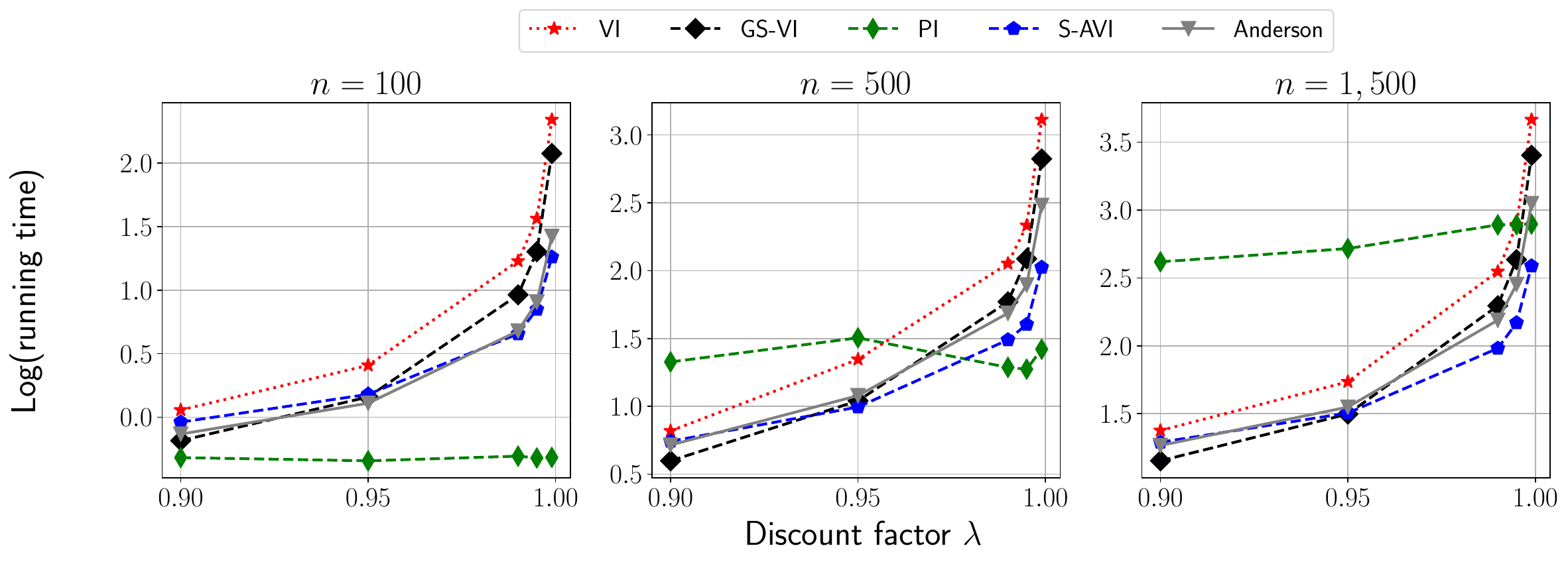}
\caption{Logarithm of the running times of various algorithms for solving Garnet MDP instances.}
\label{fig:garnet}
\end{figure}

{
\vspace{1mm}
\noindent \textbf{Numerical results.} We present in Figures \ref{fig:forest}-\ref{fig:garnet} our numerical results.  The two best algorithms are always either PI either S-AVI.  Policy Iteration may outperform S-AVI when  the number of states is small and the discount factor is high (e..g, $n=100$ for all figures), whereas for large number of states ($n=1,500$) S-AVI typically outperforms all algorithms.  Policy Iteration becomes slower for large state spaces because it requires to invert a matrix at each iteration to compute the value function of the current policy, see details in Appendix \ref{app:details-other-algorithms}.
We also note that GS-VI improves upon VI only for dense instances (here, Garnet MDPs) and not for the structured more sparse instances (forest and healthcare MDPs). Anderson VI may  be competitive with S-AVI for small values of $\lambda$ (typically $\lambda =0.90$ or $\lambda=0.95$) but its running times increase substantially for $\lambda \geq 0.99$.
 Finally,  we note that VI is almost always the slowest algorithm and that on all instances, the improvement of S-AVI compared to VI is at least one order of magnitude when $\lambda$ approaches $1$.
}

 {
\vspace{1mm}
\noindent \textbf{Relation between A-VI and S-AVI.} In our simulations, we notice that S-AVI takes an aggressive step (i.e.,  the condition in Step \ref{alg:step:condition-SAVI} of S-AVI is satisfied) for more than $99 \%$ of the steps of S-AVI across all MDP instances, at any values of $\lambda$ and $n$ that we have tested.  Additionally, even though we do not have a theoretical convergence result for A-VI, we note that it converges on all our test instances, for both random and structured instances.  The performances of S-AVI are almost exactly similar as the ones of A-VI; A-VI is slightly faster because it does not check if the condition in Step \ref{alg:step:condition-SAVI} of S-AVI is verified.  Since A-VI and S-AVI have analogous performances,  we only show the performances of S-AVI in Figures \ref{fig:forest}-\ref{fig:garnet}.} 

{
\vspace{1mm}
\noindent \textbf{Performances of Safe M-VI and divergence of \ref{alg:MVI}.} 
In our simulations we also considered Safe M-VI, a version of Algorithm \ref{alg:SAVI} where the intermediate point $\bm{v}_{s+1/2}$ is generated by \ref{alg:MVI} (instead of \ref{alg:AVI}).  We found that S-MVI does not outperform S-AVI, and may even require as much time as \ref{alg:VI} before convergence.  Empirically, it only takes M-VI steps approximately $70 \%$ of the time, compared to S-AVI that takes A-VI steps more than $99 \%$ of the time in our experiments.  We also observed that Algorithm \ref{alg:MVI} might diverge for the natural choice of step sizes for twice differentiable functions, i.e. for tuning  \eqref{eq:tuning-MVC-1}. This is analogous to the situation in convex optimization, where
 the optimal choice of parameters for $f$ a twice differentiable function might lead to a non-converging sequence for a function with a piece-wise linear gradient, i.e. when $f$ is only differentiable once (\citet{heavyball-2015}).  Note that $ \bm{v} \mapsto (\bm{I} - T)(\bm{v})$ is only a piece-wise linear function and is not differentiable everywhere.  In contrast,  accelerated gradient descent is guaranteed to converge even when $f$ is only differentiable once.}
\section{Conclusion.}
In this paper we present a fundamental connection between gradient descent and value iteration and build upon this analogy and ideas from Nesterov's acceleration to present \ref{alg:AVI}, an accelerated value iteration algorithm for MDP.  {We obtain insights on the theoretical convergence rate of accelerated algorithms in the case of affine operators on reversible MDP instances,  where the theoretical speedup is analogous to the situation in convex optimization.  We show that in all generality,  \ref{alg:AVI} may fail to converge,  prove a theoretical lower bound on the convergence rate of any first-order method for MDPs and show that \ref{alg:VI} matches this lower bound.  Based on this, we introduce Safe Accelerated Value Iteration (S-AVI), an algorithm that interweaves the aggressive \ref{alg:AVI} updates with safe \ref{alg:VI} updates. S-AVI exhibits the strong empirical performances of \ref{alg:AVI} combined with the theoretical convergence guarantees of \ref{alg:VI}, matching the lower bound on theoretical convergence rates. Interesting next steps include designing novel quasi-Newton methods for MDPs and considering other non-expansive operators than the Bellman operator.}
\small
\bibliographystyle{plainnat}
\bibliography{ref}
\normalsize
\appendix
{
\section{When is the Bellman operator the gradient of a function?}\label{app:non-existence-of-f}
In this appendix we characterize the MDP instances for which there exists a function $f: \R^{n} \rightarrow \R$ such that $\nabla f (\bm{v}) = \left(\bm{I} - T\right)(\bm{v})$. 

First, note that if $\nabla f (\bm{v}) =T(\bm{v})$, then for $g:\R^{n} \rightarrow \R, g(\bm{v}) = (1/2)\bm{v}^{\top}\bm{v} - f$, we have $\nabla g (\bm{v}) = \left(\bm{I} - T\right)(\bm{v})$.  Therefore, we will focus on whether there exists $f: \R^{n} \rightarrow \R$ for which $\nabla f (\bm{v}) =T(\bm{v})$.

\vspace{2mm}
\noindent
\textbf{Tools from differential geometry.} We will need several results from differential geometry.  Because the main focus of this paper is solving MDPs,  we state the next definitions and lemmas for our specific setting. We start with the following definition of a {\it $1$-form}, which can be found page 130 in \cite{lee2013smooth}. 
\begin{definition}
A \textit{$1$-form} of $\R^{n}$ is a map $\omega: \R^{n} \rightarrow \R^{n}$.
\end{definition}
$1$-forms are sometimes referred to as {\it vector fields}, because they map each vector $\bm{v}$ from $\R^{n}$ to another vector $\omega(\bm{v})$ of $\R^{n}$.
Note that both $T$ and $T_{\pi}$ are $1$-forms.   We then need the definitions of \textit{closed} and \textit{exact} $1$-forms, which can be found page 310 in \cite{lee2013smooth}. We also need the definition of the \textit{exterior derivative} of a $1$-form, which generalizes the differential of a function (see page 305 in \cite{lee2013smooth}).
\begin{definition}
\begin{itemize}
\item {\it Exterior derivative.} The exterior derivative $d \omega$ of a differentiable $1$-form $\omega: \R^{n} \rightarrow \R^{n}$ is a map $d \omega : \R^{n} \rightarrow \R^{n} \times \R^{n}$ such that for any $\bm{v} \in \R^{n}$,
\begin{equation}\label{eq:definition-exterior-derivative}
\left( d \omega (\bm{v}) \right)_{ij} = \frac{\partial \omega_{j}}{\partial v_{i}}(\bm{v}) -  \frac{\partial \omega_{i}}{\partial v_{j}}(\bm{v}), \forall \; (i,j) \in \{1,...,n\}^{2}.
\end{equation}
\item {\it Closed form.} A differentiable $1$-form $\omega: \R^{n} \rightarrow \R^{n}$ is closed if $d \omega = 0$.
\item {\it Exact form.} A differentiable $1$-form $\omega: \R^{n} \rightarrow \R^{n}$ is exact if there exists $\alpha : \R^{n} \rightarrow \R$, for which $d \alpha = \omega$. In this case, $\alpha$ is called a {\it primitive} of $\omega$.
\end{itemize}
\end{definition}
The next two lemmas show the equivalence of exact forms and closed forms when the domain is simply connected and the $1$-forms are differentiable.
\begin{lemma}[Lemma 6.26, \cite{lee2013smooth}; Schwarz's Lemma.]\label{lem:schwartz}
Any exact differentiable $1$-form is closed.
\end{lemma}
\begin{lemma}[Theorem 15.17, \cite{lee2013smooth}: Poincare's Lemma.]\label{lem:poincare}
Let $\omega: \D \rightarrow \R^{n}$ a closed differentiable $1$-form defined on an open set $\D \subset \R^{n}$.  If $\D$ is simply connected, then $\omega$ is exact.
\end{lemma}

Lemma \ref{lem:schwartz} simply follows from the fact that the Hessian of a twice-differentiable function $\alpha: \R^{n} \rightarrow \R$ is always a symmetric matrix. The proof of Lemma \ref{lem:poincare} is constructive: one can define the function $\alpha: \R^{n} \rightarrow \R$ for which $d \alpha = \omega$ as the (multi-dimensional) integral of $\omega$ along paths in the set $\D$, as long as the set $\D$ is simply connected.

We are now ready to characterize the MDP instances where the operator $T$ has a primitive.  To provide a better intuition of our results, we start by answering this question for $T_{\pi}$, defined in Equation \eqref{eq:T_pi}.

\vspace{2mm}
\noindent
\textbf{The case of $T_{\pi}$.}
Let $\pi$ a policy.  Recall that we have defined $\bm{r}_{\pi} \in \R^{n}, \bm{L}_{\pi} \in \R^{n \times n}$ as
\[ r_{\pi,i} = \sum_{a \in \A} \pi_{ia}r_{ia}, L_{\pi,ij} = \sum_{a \in \A} \pi_{ia}P_{iaj}, \forall \; (i,j) \in \{1,...,n\}^{2}.\]
With these notations, we can write $T_{\pi}: \bm{v} \mapsto \bm{r}_{\pi} + \lambda \bm{L}_{\pi}\bm{v}$. Because the operator $T_{\pi}$ is affine, it is always differentiable. By definition, it is a $1$-form, and it is defined on $\R^{n}$, which is simply connected.  Therefore,  from Lemma \ref{lem:schwartz} and Lemma \ref{lem:poincare} the existence of a function $f_{\pi}: \R^{n} \rightarrow \R$ such that $\nabla f_{\pi} = T_{\pi}$ is exactly equivalent to 
\begin{equation}\label{eq:exterior-deriv-T-pi}
 \frac{\partial T_{\pi}(\bm{v})_{j}}{\partial v_{i}}(\bm{v}) =  \frac{\partial T_{\pi}(\bm{v})_{i}}{\partial v_{j}}(\bm{v}), \forall \; (i,j) \in \{1,...,n\}^{2}.
\end{equation}
Equation \eqref{eq:exterior-deriv-T-pi} is equivalent to
\begin{equation}\label{eq:symmetric-instance-T-pi}
L_{\pi,ij} = L_{\pi,ji},  \forall \; (i,j) \in \{1,...,n\}^{2},
\end{equation}
i.e.,  we have proved the following proposition.
\begin{proposition} Let $\pi$ a policy.
There exists a function $f_{\pi}: \R^{n} \rightarrow \R$ such that $\nabla f_{\pi} = T_{\pi}$ if and only if $\bm{L}_{\pi}$ is symmetric, where $\bm{L}_{\pi}$ is the transition matrix of the Markov chain induced by $\pi$ on the set of states $\{1,...,n\}$.
\end{proposition}
Note that if $\bm{L}_{\pi}$ is symmetric, we can even write the closed-form expression of $f_{\pi}$ as
\[ f_{\pi}(\bm{v}) = c_{\pi} + \bm{v}^{\top}\bm{r}_{\pi} + \frac{\lambda}{2} \bm{v}^{\top}\bm{L} \bm{v},\]
where $c_{\pi} \in \R$ is any constant.

\vspace{2mm}
\noindent
\textbf{The case of $T$.} For the Bellman operator $T$, the question of the existence of a primitive is slightly more complex, because 1) $T$ is not differentiable, and 2) $T$ is not affine. However,  $T$ is piece-wise affine, as the maximum of some affine functions.  Let us introduce the following notations.
\begin{enumerate}
\item For each policy $\pi \in \Pi$,  let $\V_{\pi}$ the region of $\R^{n}$ where $T$ and $T_{\pi}$ coincides:
\[\V_{\pi} = \{ \bm{v} \in \R^{n} \; | \; T_{\pi}(\bm{v}) = T(\bm{v}) \}.\]
\item Let $\Pi^{\circ}$ the set of deterministic policies for which $\V_{\pi}$ has non-empty interior:
\[ \Pi^{\circ} = \{ \pi \in \Pi \; | \;  \pi \text{ is deterministic and } \V_{\pi} \text{ has non empty interior } \}.\]
\end{enumerate}
Note that the set $\Pi^{\circ}$ is always finite, because the set of deterministic policies is finite.
By definition of $T$ and $T_{\pi}$,  $\V_{\pi}$ is always a polytope. Therefore, $\V_{\pi}$ is always simply connected. Additionally, $\bigcup_{\pi \in \Pi^{\circ}} \V_{\pi}$ is a covering of $\R^{n}$.  To apply Lemma \ref{lem:schwartz} and Lemma \ref{lem:poincare}, we need a domain that is simply connected {\it and} open.  We first obtain the following proposition, where the primitives are defined on the interiors of the sets $\V_{\pi}$.
\begin{proposition}\label{prop:T-gradient-field-1}
Assume that for all policy $\pi$ in $\Pi^{\circ}$, the matrix $\bm{L}_{\pi}$ is symmetric. Then there exists a finite set of differentiable functions $\F_{\pi}=\{ f_{\pi}: int(\V_{\pi}) \rightarrow \R \; | \; \pi \in \Pi^{\circ} \}$ such that
\[ \forall \; \pi \in \Pi^{\circ},  \forall \; \bm{v} \in int(\V_{\pi}),  \nabla f_{\pi}(\bm{v}) = T(\bm{v}).\]
\end{proposition}
By using an argument of extension by continuity (e.g., Kirszbraun's theorem~\citep{valentine1945lipschitz}),  we can extend each function $f_{\pi}: int(\V_{\pi}) \rightarrow \R$ to functions $\tilde{f}_{\pi}: \V^{\pi} \rightarrow \R$.  Therefore,  we obtain the following proposition, which characterizes the MDP instances for which $T$ has a primitive.
\begin{proposition}\label{prop:T-gradient-field-2}
Assume that for all policy $\pi$ in $\Pi^{\circ}$, the matrix $\bm{L}_{\pi}$ is symmetric. Then there exists a finite set of differentiable functions $\tilde{\F}_{\pi}=\{ \tilde{f}_{\pi}: \V_{\pi} \rightarrow \R \; | \; \pi \in \Pi^{\circ} \}$ such that
\[ \forall \; \pi \in \Pi^{o},  \forall \; \bm{v} \in \V_{\pi},  \nabla \tilde{f}_{\pi}(\bm{v}) = T(\bm{v}).\]
\end{proposition}
}

\section{Proofs of Section \ref{sec:analysis-MVC}}\label{app:proof-section-MVC}
In this section we give three lemmas that we use in our proof of Theorem \ref{th:MVC-1}. 
 We first analyze the structure of the iterations in Algorithm \ref{alg:MVC}, with constants \eqref{eq:tuning-MVC-1}. Let $\bm{I} \in \R^{n \times n}$ be the identity matrix and $ \bm{B}_{\pi}  \in \R^{2n \times 2n}$ defined as
\begin{equation*}
 \bm{B}_{\pi} = \left[ {\begin{array}{cc}
\lambda (\beta+1) \bm{L}_{\pi} & - \beta \bm{I} \\
    \bm{I}  & \bm{0}
  \end{array} } \right].
\end{equation*}
\begin{lemma}\label{lem:LTV-MVC} 
Let $\bm{x}^{\pi} = (\bm{v}^{\pi},\bm{v}^{\pi}) \in \R^{2 n},\bm{x}_{s} = (\bm{v}_{s},\bm{v}_{s-1}) \in \R^{2 n}$. Then $$\bm{x}_{s} = \bm{x}^{\pi}+ \bm{B}_{\pi}^{s-1}\left(\bm{x}_{0}-\bm{x}^{\pi}\right), \forall \; s \geq 1.$$
\end{lemma}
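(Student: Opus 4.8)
The plan is to follow the template of the proof of Lemma \ref{lem:LTV-AVC} verbatim in structure, only with the momentum recursion in place of the accelerated one. First I would substitute the explicit form $T_{\pi}(\bm{v}) = \bm{r}_{\pi} + \lambda \bm{L}_{\pi}\bm{v}$ into the \ref{alg:MVC} update $\bm{v}_{s+1} = \bm{v}_{s} - \alpha(\bm{v}_{s} - T_{\pi}(\bm{v}_{s})) + \beta(\bm{v}_{s} - \bm{v}_{s-1})$ and collect the terms that multiply $\bm{v}_{s}$ and $\bm{v}_{s-1}$ separately. This yields
\[
\bm{v}_{s+1} = \left[ (1 + \beta - \alpha)\bm{I} + \alpha\lambda \bm{L}_{\pi} \right]\bm{v}_{s} - \beta \bm{v}_{s-1} + \alpha \bm{r}_{\pi}, \quad \forall\, s \geq 1.
\]

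The crux of the argument is the step-size identity coming from the tuning \eqref{eq:tuning-MVC-1}: I would verify that $\alpha = 1 + \beta$, i.e. that $2/(1+\sqrt{1-\lambda^{2}}) = 1 + (1-\sqrt{1-\lambda^{2}})/(1+\sqrt{1-\lambda^{2}})$, which holds after putting the right-hand side over a common denominator. This identity makes the scalar coefficient $1 + \beta - \alpha$ vanish, so the coefficient of $\bm{v}_{s}$ collapses to exactly $\alpha\lambda \bm{L}_{\pi} = \lambda(\beta+1)\bm{L}_{\pi}$, matching the upper-left block of $\bm{B}_{\pi}$. This is the analogue of the three simultaneous identities used in Lemma \ref{lem:LTV-AVC}; unlike there, here a single relation does all the work, so the bookkeeping is lighter.

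With that in hand I would rewrite the recursion as the first-order affine system $\bm{x}_{s+1} = \bm{B}_{\pi}\bm{x}_{s} + \bm{b}_{\pi}$ on $\R^{2n}$, where $\bm{x}_{s} = (\bm{v}_{s},\bm{v}_{s-1})$, the matrix $\bm{B}_{\pi}$ is the one displayed just before the lemma, and $\bm{b}_{\pi} = (\alpha\bm{r}_{\pi}, \bm{0})$; the bottom block of the identity $\bm{x}_{s+1}=\bm{B}_{\pi}\bm{x}_s+\bm{b}_\pi$ is the trivial shift $\bm{v}_{s} = \bm{v}_{s}$. Next I would check that $\bm{x}^{\pi} = (\bm{v}^{\pi},\bm{v}^{\pi})$ is the fixed point of this affine map: using $\lambda\bm{L}_{\pi}\bm{v}^{\pi} = \bm{v}^{\pi} - \bm{r}_{\pi}$ (which is just $\bm{v}^{\pi}=T_\pi(\bm{v}^\pi)$), the top block evaluates to $(\beta+1)(\bm{v}^{\pi}-\bm{r}_{\pi}) - \beta\bm{v}^{\pi} + \alpha\bm{r}_{\pi} = \bm{v}^{\pi} + (\alpha - \beta - 1)\bm{r}_{\pi} = \bm{v}^{\pi}$, again by $\alpha = \beta+1$.

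Finally I would subtract the fixed-point equation $\bm{x}^{\pi} = \bm{B}_{\pi}\bm{x}^{\pi} + \bm{b}_{\pi}$ from the recursion to obtain $\bm{x}_{s+1} - \bm{x}^{\pi} = \bm{B}_{\pi}(\bm{x}_{s} - \bm{x}^{\pi})$ and iterate to conclude $\bm{x}_{s} = \bm{x}^{\pi} + \bm{B}_{\pi}^{s-1}(\bm{x}_{0} - \bm{x}^{\pi})$ for all $s \geq 1$, exactly as in Lemma \ref{lem:LTV-AVC}. I do not anticipate a genuine obstacle here: the only substantive point is the algebraic verification of $\alpha = 1+\beta$, and everything else is linear bookkeeping identical to the accelerated case. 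The one thing I would double-check is the index convention in the exponent $\bm{B}_{\pi}^{s-1}$ versus $\bm{B}_{\pi}^{s}$, keeping it consistent with the statement of Lemma \ref{lem:LTV-AVC} so that the downstream spectral-radius argument for Theorem \ref{th:MVC-1} applies unchanged.
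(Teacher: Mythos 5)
Your proposal is correct and follows essentially the same route as the paper's proof: expand the \ref{alg:MVC} update using $T_{\pi}(\bm{v})=\bm{r}_{\pi}+\lambda\bm{L}_{\pi}\bm{v}$, use the key identity $\alpha=1+\beta$ from the tuning \eqref{eq:tuning-MVC-1} to collapse the coefficient of $\bm{v}_{s}$, recast the dynamics as the affine system $\bm{x}_{s+1}=\bm{B}_{\pi}\bm{x}_{s}+\bm{b}_{\pi}$, verify that $\bm{x}^{\pi}$ is its fixed point, and subtract and iterate. Your explicit check of the fixed-point equation and your flag on the $\bm{B}_{\pi}^{s-1}$ versus $\bm{B}_{\pi}^{s}$ indexing are if anything slightly more careful than the paper's write-up, which contains the same minor index convention issue.
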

\begin{proof}[Proof of Lemma \ref{lem:LTV-MVC}.]
Let $\alpha,\beta \in \R$. Expanding the recursion in Algorithm \ref{alg:MVC}, we have, for any $s \geq 1, \bm{v}_{s+1} =\left( (1-\alpha+\beta) \bm{I} + \alpha \lambda \bm{L}_{\pi} \right) \bm{v}_{s} - \beta \bm{v}_{s-1} + \alpha \bm{r}_{\pi}.$
For $\alpha,\beta$ tuned as in \eqref{eq:tuning-MVC-1}, we have $\alpha = 1 + \beta$.
Therefore, $\bm{x}_{s}=\bm{B}_{\pi}\bm{x}_{s-1} + \bm{b}_{\pi}$,  for $\bm{x}_{s} = (\bm{v}_{s},\bm{v}_{s-1})$, for
\[
 \bm{B}_{\pi} = \left[ {\begin{array}{cc}
\lambda (\beta+1) \bm{L}_{\pi} & -\beta\bm{I} \\
    \bm{I}  & \bm{0}
  \end{array} } \right],  \]
  \[\bm{b}_{\pi} =\alpha \left[ {\begin{array}{c}
\bm{r}_{\pi} \\
\bm{0}
  \end{array} } \right]. \]
The value function $\bm{v}^{\pi}$ is the unique solution to the Bellman equation $\bm{v}^{\pi} = \bm{r}_{\pi} + \lambda \bm{L}_{\pi} \bm{v}^{\pi}$. This readily implies that $\bm{x}^{\pi} = (\bm{v}_{\pi},\bm{v}_{\pi})$ satisfies $\bm{x}^{\pi} = \bm{B}_{\pi} \bm{x}^{\pi} + \bm{b}_{\pi}$. Therefore,
\begin{align*}
\bm{x}_{s}& =\bm{B}_{\pi}\bm{x}_{s-1} + \bm{b}_{\pi}  = \bm{B}_{\pi}\bm{x}^{\pi}+ \bm{B}_{\pi}(\bm{x}_{s-1}-\bm{x}^{\pi}) + \bm{b}_{\pi}  = \bm{x}^{\pi} +\bm{B}_{\pi}(\bm{x}_{s-1}-\bm{x}^{\pi}),
\end{align*}
from which we can conclude that
$ \bm{x}_{s} = \bm{x}^{\pi} + \bm{B}_{\pi}^{s}(\bm{x}_{0}-\bm{x}^{\pi}).$
\end{proof}

The next lemma bounds the spectral radius of the matrix $\bm{B}_{\pi}$, for Algorithm \ref{alg:MVC} with constants \eqref{eq:tuning-MVC-1}.
\begin{lemma}\label{lem:radius-tuning-MVC-1} Consider a policy $\pi$ which defines an irreducible, reversible Markov chain on the set of states $\X$. Consider constant step sizes $\alpha,\beta$ as in \eqref{eq:tuning-MVC-1}. Then we have
\[\rho(\bm{B}_{\pi}) = \dfrac{1-\sqrt{\kappa}}{1+\sqrt{\kappa}}.\]
\end{lemma}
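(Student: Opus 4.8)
The plan is to mirror the proof of Lemma \ref{lem:radius}, reducing the $2n\times 2n$ eigenvalue problem for $\bm{B}_{\pi}$ to a family of scalar quadratics indexed by the eigenvalues of $\bm{L}_{\pi}$. Exploiting the block-companion structure of $\bm{B}_{\pi}$, I would look for eigenvectors of the form $(\omega\bm{y},\bm{y})$ with $\bm{y}$ an eigenvector of $\bm{L}_{\pi}$ for an eigenvalue $\mu$, and, using the tuning identity $\alpha=1+\beta$ already established in the proof of Lemma \ref{lem:LTV-MVC}, show that each $\mu$ contributes (at most) two eigenvalues $\omega$ of $\bm{B}_{\pi}$ solving
\[ \omega^{2} - \lambda(1+\beta)\mu\,\omega + \beta = 0. \]
To capture all $2n$ eigenvalues with multiplicity (and avoid invoking diagonalizability), one can alternatively compute $\det(\bm{B}_{\pi}-\omega\bm{I})$ by a Schur complement on the $(2,2)$ block and check that the characteristic polynomial factors as $\prod_{\mu\in Sp(\bm{L}_{\pi})}\bigl(\omega^{2}-\lambda(1+\beta)\mu\,\omega+\beta\bigr)$.

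Next I would invoke the reversibility hypothesis to obtain $Sp(\bm{L}_{\pi})\subset[-1,1]$, exactly as in Lemma \ref{lem:radius} (via Section 3.3 in \cite{revers-book}), so that every relevant $\mu$ is real. Then I would analyze the quadratic above: by Vieta's formulas the product of its two roots equals $\beta$, so whenever the two roots are complex conjugates each has modulus exactly $\sqrt{\beta}$. These roots are complex precisely when the discriminant $\Delta(\mu)=\lambda^{2}(1+\beta)^{2}\mu^{2}-4\beta$ is negative.

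The heart of the argument is the algebraic identity $\lambda^{2}(1+\beta)^{2}=4\beta$ under the tuning \eqref{eq:tuning-MVC-1}. Writing $u=\sqrt{1-\lambda^{2}}$, so that $\beta=(1-u)/(1+u)$ and $1+\beta=2/(1+u)$, I would verify $\lambda^{2}(1+\beta)^{2}=4\lambda^{2}/(1+u)^{2}=4(1-u)/(1+u)=4\beta$ using $\lambda^{2}=(1-u)(1+u)$. This gives $\Delta(\mu)=4\beta(\mu^{2}-1)\le 0$ on $[-1,1]$, with equality only at $\mu=\pm 1$. Hence for $|\mu|<1$ the two roots are complex conjugate of modulus $\sqrt{\beta}$, while at $\mu=\pm1$ there is a repeated real root $\pm\lambda(1+\beta)/2=\pm\sqrt{\beta}$, again of modulus $\sqrt{\beta}$. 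Every eigenvalue of $\bm{B}_{\pi}$ therefore has modulus $\sqrt{\beta}$, so $\rho(\bm{B}_{\pi})=\sqrt{\beta}$. I would close by rewriting $\sqrt{\beta}=(\sqrt{\kappa}-1)/(\sqrt{\kappa}+1)$ through $L=1+\lambda$, $\mu=1-\lambda$, $\kappa=L/\mu$, which is the claimed rate (up to the harmless sign in the displayed expression $(1-\sqrt{\kappa})/(1+\sqrt{\kappa})$, whose absolute value is $\sqrt{\beta}$).

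The step requiring the most care is precisely this discriminant identity $\lambda^{2}(1+\beta)^{2}=4\beta$: it is exactly this choice of step sizes that forces $\Delta\le 0$ uniformly over $[-1,1]$ and makes \emph{every} eigenvalue have the same modulus $\sqrt{\beta}$. It must be combined with the reversibility assumption, since the analysis crucially needs $\mu$ real; as the cyclic instance of Section \ref{sec:disc-reversibility} shows, a complex $\mu$ can push some root's modulus strictly above $\sqrt{\beta}$ and break the conclusion.
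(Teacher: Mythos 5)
Your proof is correct and follows essentially the same route as the paper's: reduce the spectrum of $\bm{B}_{\pi}$ to the quadratics $\omega^{2} - \lambda(1+\beta)\mu\,\omega + \beta = 0$ indexed by the eigenvalues $\mu$ of $\bm{L}_{\pi}$ (real by reversibility), observe that the tuning forces the discriminant to equal $4\beta(\mu^{2}-1) \leq 0$ on $[-1,1]$, and conclude that every eigenvalue of $\bm{B}_{\pi}$ has modulus $\sqrt{\beta} = \lambda/(1+\sqrt{1-\lambda^{2}}) = (\sqrt{\kappa}-1)/(\sqrt{\kappa}+1)$. Your Vieta/Schur-complement packaging is a minor refinement of the paper's explicit computation of the roots $\omega^{\pm}(\mu)$ (and you rightly flag that the stated rate $(1-\sqrt{\kappa})/(1+\sqrt{\kappa})$ should be read in absolute value), but the substance is identical.
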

\begin{proof}[Proof of Lemma \ref{lem:radius-tuning-MVC-1}.]
For $\alpha = 2/(1+\sqrt{1-\lambda^{2}}), \beta =(1-\sqrt{1-\lambda^{2}})/(1+\sqrt{1-\lambda^{2}})$, the matrix $\bm{B}_{\pi}$ becomes
\[ \bm{B}_{\pi} = \left[ {\begin{array}{cc} \dfrac{2 \lambda}{1+\sqrt{1-\lambda^{2}}}
\bm{L}_{\pi} & - (\dfrac{1-\sqrt{1-\lambda^{2}}}{1+\sqrt{1-\lambda^{2}}}\bm{I} \\
    \bm{I}  & \bm{0}
  \end{array} } \right].\]
  To any eigenvalue $\mu$ of $\bm{L}_{\pi}$ correspond (at most) two eigenvalues of $\bm{B}_{\pi}$, satisfying the following equation in $\omega$:
\begin{equation}\label{eq:sec-order-MVC-1}
  \omega^{2} - \dfrac{2 \lambda \mu}{1+\sqrt{1-\lambda^{2}}}\omega + \dfrac{1-\sqrt{1-\lambda^{2}}}{1+\sqrt{1-\lambda^{2}}}= 0.
  \end{equation}
  The discriminant of \eqref{eq:sec-order-MVC-1} is $\Delta = \dfrac{4 \lambda^{2} (\mu^{2}-1)}{(1+\sqrt{1-\lambda^{2}})^{2}}$. Note that $\mu \in \R$ because of our reversibility assumption. We therefore have the following expressions for the roots of \eqref{eq:sec-order-MVC-1}:
  \begin{align}
  \omega^{+}(\mu) & = \dfrac{\lambda}{1+\sqrt{1-\lambda^{2}}} \cdot \left(  \mu + i\sqrt{1-\mu^{2}}\right),\label{eq:w_plus_MVC_1}\\
    \omega^{-}(\mu) & = \dfrac{\lambda}{1+\sqrt{1-\lambda^{2}}} \cdot \left(  \mu - i\sqrt{1-\mu^{2}}\right). \label{eq:w_minus_MVC_1}
  \end{align}
  But $ | \omega^{+}(\mu)|  = | \omega^{-}(\mu)| = \dfrac{\lambda}{1+\sqrt{1-\lambda)^{2}}} \sqrt{ \mu^{2} + 1 - \mu^{2}} = \dfrac{\lambda}{1+\sqrt{1-\lambda)^{2}}}.$
%
  We can therefore conclude that
  
  \begin{align*}
  \rho(\bm{B}_{\pi}) & = \dfrac{\lambda}{1+\sqrt{1-\lambda)^{2}}} = \dfrac{\sqrt{1+\lambda}-\sqrt{1-\lambda}}{\sqrt{1+\lambda}+\sqrt{1\lambda}}= \dfrac{ 1-\sqrt{\kappa} }{1+\sqrt{\kappa}}.
\end{align*}  
\end{proof}
Theorem \ref{th:MVC-1} follows from combining Lemma \ref{lem:LTV-MVC} with Lemma \ref{lem:radius-tuning-MVC-1}.  {The fact that $0 < (1-\sqrt{1/\kappa})/(1+\sqrt{1/\kappa}) < \lambda$ for $\lambda \in (0,1)$ also follows directly from \eqref{eq:comparison-rate}.}


\vspace{1mm}
\noindent {\textbf{Divergence of \ref{alg:MVC} on a non-reversible MDP instance.}}
We consider the MDP instance defined in Section \ref{sec:disc-reversibility}.
There are $n$ states, where $n$ is even and one action for all states. The transition matrix is defining a cycle. Therefore, the eigenvalues of the transition matrix are the $n-th$ roots of $1$ in $\C$. 
%
  Therefore, $i \in Sp(\bm{L})$. Let us compute $|\omega^{+}(i)|$, where $\omega^{+}$ is defined as \eqref{eq:w_plus_MVC_1}. For $\mu=i$, the discriminant becomes $\Delta = \dfrac{-8 \lambda^{2}}{(1+\sqrt{1-\lambda^{2}})^{2}}$, which leads to
   \begin{align*}
   |\omega^{+}(i)| & = | \dfrac{1}{2} \left( \dfrac{2 \lambda i}{1+\sqrt{1-\lambda^{2}}} + \dfrac{2 \lambda i \sqrt{2}}{1+\sqrt{1-\lambda^{2}}}  \right) |  =| \dfrac{\lambda}{1+\sqrt{1-\lambda^{2}}} \left( 1+\sqrt{2}\right)i | = \sqrt{3} \cdot \dfrac{\lambda}{1+\sqrt{1-\lambda^{2}}},
   \end{align*}
   and \ref{alg:MVC} may diverge for $\lambda$ close to $1$.
   
\section{Proof of Proposition \ref{prop:B-pi-IRU}.}\label{app:prop-B-pi-IRU} 
For $s \geq 0$ and $i \in \X$, we have $ T(\bm{h}_{s})_{i} = \max_{a \in \A} \{ r_{ia} + \lambda \cdot \bm{P}_{ia}^{\top}\bm{h}_{s} \},$ and therefore there exists a policy $\pi_{s}$ such that $T(\bm{h}_{s}) = T_{\pi_{s}}(\bm{h}_{s})=  \bm{r}_{\pi_{s}} + \lambda \cdot \bm{L}_{\pi_{s}} \bm{h}_{s}$.
Let $(\bm{v}_{s})_{s \geq 0}$ be the sequence of iterates of Algorithm \ref{alg:AVI} for the Bellman operator $T$. Similarly as for \eqref{eq:rec-x-AVC} in the proof of Lemma \ref{lem:LTV-AVC}, we can write
\begin{equation}\label{eq:rec-x-AVI}
\bm{x}_{s+1} = \bm{B}_{\pi_{s}}\bm{x}_{s} + \bm{b}_{\pi_{s}}.
\end{equation}
Let us write $\bm{x}^{*} = \left( \bm{v}^{*},\bm{v}^{*} \right),$ where $\bm{v}^{*}$ is the value function of the optimal policy.
We define $\bm{u}_{0} \in \R^{n}$ as $\bm{u}_{0} = \bm{x}_{0} - \bm{x}^{*}$. Now let us assume that  for a given $s \geq 0$, \begin{equation}\label{eq:hyp-rec}
   \bm{u}_{s}= \bm{x}_{s} - \bm{x}^{*}.
\end{equation}
Let us write $H_{\pi}: \R^{2 n } \rightarrow \R^{2 n}$ such that $H_{\pi}(\bm{x}) = \bm{B}_{\pi}\bm{x} + \bm{b}_{\pi}.$ By definition of the Bellman operator as the maximum of some affine operators, we have
$\bm{x}_{s+1} = \max_{\pi} H_{\pi}(\bm{x}_{s}), \forall \; s \geq 0,$
with the understanding that the maximum is taking row-by-row on the rows of the operator $\bm{B}_{\pi}$ and the vector $\bm{b}_{\pi}.$  
Therefore we have
\begin{align}
    \bm{x}_{s+1} & = \max_{\pi} H_{\pi}(\bm{x}_{s}) \nonumber \\
    & \geq H_{\pi^{*}} ( \bm{x}_{s}) \\
    & \geq H_{\pi^{*}}\left( \bm{x}^{*} \right) + \bm{B}_{\pi^{*}} \left(\bm{x}_{s} - \bm{x}^{*} \right) \nonumber \\
    & \geq \bm{x}^{*} + \bm{B}_{\pi^{*}} \left(\bm{x}_{s} - \bm{x}^{*} \right), \label{eq:step-1}
\end{align}
where \eqref{eq:step-1} follows from $H_{\pi^{*}}(\bm{x}^{*}) = \bm{x}^{*}.$
Now from \eqref{eq:hyp-rec}, $\bm{x}_{s} - \bm{x}^{*} = \bm{u}_{s},$ and therefore we have proved that
$  \bm{x}_{s+1} \geq \bm{x}^{*} + \bm{B}_{\pi^{*}}\bm{u}_{s}.$
Moreover,
\begin{align}
    \bm{x}_{s+1} & = \max_{\pi} H_{\pi}(\bm{x}_{s}) \nonumber \\
    & = H_{\pi_{s}} ( \bm{x}_{s}) \nonumber \\
    & = H_{\pi_{s}}\left( \bm{x}^{*} \right) + \bm{B}_{\pi_{s}} \left(\bm{x}_{s} - \bm{x}^{*} \right) \nonumber \\
    & \leq \bm{x}^{*} + \bm{B}_{\pi_{s}} \left( \bm{x}_{s} - \bm{x}^{*} \right),\label{eq:step-2}
\end{align}
where \eqref{eq:step-2} follows from $H_{\pi}(\bm{x}^{*}) \leq \bm{x}^{*}, \forall \; \pi \in \Pi,$ by definition of $\bm{x}^{*} =(\bm{v}^{*},\bm{v}^{*})$ and $\bm{v}^{*}$ being the fixed-point of the Bellman operator.
Again, using \eqref{eq:hyp-rec}, we can conclude that
$
    \bm{x}_{s+1} \leq \bm{x}^{*} + \bm{B}_{\pi_{s}} \bm{u}_{s}.
$
Let us define the next vector $\bm{u}_{s+1}$ as $\bm{u}_{s+1} = \bm{x}_{s+1} - \bm{x}^{*}.$
Overall, we have proved that
\begin{equation}\label{eq:recursion-u}
     \bm{x}_{s} = \bm{x}^{*} + \bm{u}_{s} \Rightarrow \bm{B}_{\pi^{*}} \bm{u}_{s} \leq \bm{u}_{s+1}  \leq  \bm{B}_{\pi_{s}} \bm{u}_{s}.
\end{equation}
We claim that \eqref{eq:recursion-u} implies that there exists a (randomized) policy $\hat{\pi}_{s}$ for which $\bm{u}_{s+1}=\bm{B}_{\hat{\pi}_{s+1}}\bm{u}_{s}.$ 
Let us write $\bm{u}_{s}=\left[ {\begin{array}{c}
    \bm{t}_{s} \\
   \bm{d}_{s} \\
  \end{array} } \right]$, for some vectors $\bm{t}_{s},\bm{d}_{s} \in \R^{n}.$ For the sake of brevity, let us write $\theta = 1 - \sqrt{1/\kappa}.$
  Then \eqref{eq:recursion-u} can be written 
  \begin{align*}
  \left[ {\begin{array}{c}
    \left( \bm{I} + \bm{L}_{\pi^{*}} \right) \left( \theta \bm{t}_{s} - (\theta^{2}/2)\bm{d}_{s} \right)    \\
   \bm{t}_{s} \\
  \end{array} } \right] & \leq \left[ {\begin{array}{c}
    \bm{t}_{s+1} \\
   \bm{d}_{s+1} \\
  \end{array} } \right], \\
  \left[ {\begin{array}{c}
    \bm{t}_{s+1} \\
   \bm{d}_{s+1} \\
  \end{array} } \right] &   \leq \left[ {\begin{array}{c}
     \left( \bm{I} + \bm{L}_{\pi_{s}} \right) \left( \theta \bm{t}_{s} - (\theta^{2}/2)\bm{d}_{s} \right) \\
   \bm{t}_{s} \\
  \end{array} } \right]
  \end{align*}
  This readily implies that $\bm{d}_{s+1}=\bm{t}_{s}.$ Moreover, for each row $i \in \{1,...,n\}$, we have 
  \[t_{s+1,i} \in \left[ \left(\left( \bm{I} + \bm{L}_{\pi^{*}} \right) \left( \theta \bm{t}_{s} - (\theta^{2}/2)\bm{d}_{s} \right) \right)_{i},\left( \left( \bm{I} + \bm{L}_{\pi_{s}} \right) \left( \theta \bm{t}_{s} - (\theta^{2}/2)\bm{d}_{s} \right) \right)_{i} \right],\] which for some appropriate coefficients $\mu_{i} \in [0,1], 1 \leq i \leq n$, can be written, 
  \[t_{s+1,i} = \mu_{i} \left(\left( \bm{I} + \bm{L}_{\pi^{*}} \right) \left( \theta \bm{t}_{s} - (\theta^{2}/2)\bm{d}_{s} \right) \right)_{i}+ (1-\mu_{i})\left(\left( \bm{I} + \bm{L}_{\pi_{s}} \right) \left( \theta \bm{t}_{s} - (\theta^{2}/2)\bm{d}_{s} \right)  \right)_{i}.\]
   Therefore we construct the policy $\hat{\pi}_{s}$ such that $\hat{\pi}_{s, ia} = \mu_{i} \pi^{*}_{ia} + (1-\mu_{i}) \pi_{s,ia}, \forall \; (i,a) \in \X \times \A,$ and we obtain $\bm{t}_{s+1} =\left( \bm{I} + \bm{L}_{\hat{\pi}_{s}} \right) \left( \theta \bm{t}_{s} - (\theta^{2}/2)\bm{d}_{s} \right),$ i.e., we have proved that
  $\bm{u}_{s+1} = \bm{B}_{\hat{\pi}_{s}} \bm{u}_{s}.$
  We can conclude that for any $s \geq 0$,
  $\bm{x}_{s} = \bm{x}^{*} +  \bm{u}_{s} \Rightarrow \exists \; \hat{\pi}_{s}, \bm{x}_{s+1} = \bm{x}^{*} +  \bm{B}_{\hat{\pi}_{s}}\bm{u}_{s}.$
  Since this is also true for $s=0$, we can conclude that there exists a sequence of policies $(\hat{\pi}_{s})_{s \geq 0}$ such that
$\bm{x}_{s} = \bm{x}^{*} +  \bm{B}_{\hat{\pi}_{s-1}} \cdot ... \cdot \bm{B}_{\hat{\pi}_{0}}(\bm{x}_{0}-\bm{x}^{*}), \forall \; s \geq 1.$

\section{Details on the numerical experiments}\label{app:details-numerical-exp}
\subsection{Algorithms}\label{app:details-other-algorithms}
{
In this appendix we present some details about the algorithms used in our numerical experiment section. 
Value Iteration and Gauss-Seidel Value Iteration are first-order methods for MDPs. Anderson Value Iteration is related to quasi-Newton methods for MDPs, while Policy Iteration is equivalent to Newton methods. We refer the reader to \cite{grand2021convex} for a classification of algorithms as first-order, second-order and quasi-Newton methods for MDPs.
\paragraph{Gauss-Seidel Value Iteration.}  Gauss-Seidel Value Iteration (GS-VI) is an asynchronous version of Value Iteration, and its theoretical convergence rate is at least as fast as Value Iteration~\citep{Puterman}:
\[
v_{s+1,i} = \max_{a \in \A} r_{sa} + \lambda \cdot \sum_{j=1}^{i-1} P_{iaj'}v_{s+1,j} + \lambda \cdot \sum_{j=i}^{n} P_{iaj}v_{s,j}.\]
\paragraph{Policy Iteration.} Policy Iteration (PI)~\citep{ye-2011} alternates between \textit{value computation} steps \eqref{eq:PI-value-comp} and \textit{policy improvement} steps \eqref{eq:PI-policy-improv}: it starts at $\bm{v}_{0} \in \R^{n}$ and returns a sequence of policies $\left( \pi_{s} \right)_{s \geq 0}$ such that for all $s \geq 0$,
\begin{align}
T(\bm{v}_{s}) & = T_{\pi_{s}}(\bm{v}_{s}),  \label{eq:PI-policy-improv} \\
\bm{v}_{s+1} & = \bm{v}^{\pi_{s}}. 
\label{eq:PI-value-comp}
\end{align}
\paragraph{Anderson Value Iteration.} This algorithm combines the last $m+1$-iterates $T(\bm{v}_{s}), ..., T(\bm{v}_{s-m})$ to obtain the next iterate $\bm{v}_{s+1}$:
\[ \bm{v}_{s+1} = \sum_{s'=0}^{m} \alpha_{s'} T(\bm{v}_{s-m+s'}).\]
The weights $\alpha_{0}, ..., \alpha_{m}$ are updated at every step $s$. We refer to Algorithm 1 and Equation (1) in \cite{ref-c} for further details. There is no heuristics for choosing $m$; we choose $m=5$ in our numerical experiments and initialize $\bm{v}_{1} = T(\bm{v}_{0}), ..., \bm{v}_{m} = T(\bm{v}_{m-1})$.}
{
\subsection{MDP instances}\label{app:details-MDP-instances}
We now provide some details about the MDP instances used in our simulations of Section \ref{sec:simu}.
\subsubsection{Garnet MDPs.}
 Garnet MDPs were introduced in \cite{garnet} to provide a class of MDP parametrized by a branching factor $n_{\sf branch}$, equal to the proportion of reachable next states from each state-action pair $(i,a)$.  For each state $i$ and action $a$,  a number $n_{\sf next}=\lfloor n_{\sf branch} \times n \rfloor$ of possible next states are chosen uniformly at random from the set of states. Then $n_{\sf next}-1$ scalars at chosen at uniformly random in $[0,1]$, and the lengths of the corresponding intervals give the likelihood of transitioning toward each next state. We use $n_{branch} = 80 \%,A=50$ in our numerical experiments and draw the reward parameters at random uniformly in $[0,100]$.
 }
 \subsubsection{Forest MDP.}
 {
 In this instance, the state represents the current age of the forest; the decision maker earns reward for cutting and selling wood.   Every year, wildfire may burn and bring the forest back to the youngest age.  A complete description of this model can be found in \cite{pymdp}, where the first mention of this model is from \cite{possingham1997application}.  

\paragraph{States.} There are $n$ states. The state $1$ is the youngest state for the forest. The forest can not grow beyond state $n$. 
\paragraph{Actions.} The two actions are \textit{wait} and \textit{cut \& sell}.
\paragraph{Transitions. } When the action is \textit{wait}, the forest may transition to state $i+1$ from state $i$ (the forest grows with probability $1-p$) or to state $1$ with probability $p$ (the forest burns). When the action is \textit{cut \& sell}, the next state is always state $1$. We use $p=0.05$ for the probability of wildfire.
\paragraph{Rewards.} 
There is a reward of $4$ when the forest reaches the oldest state (state $n$) and the chosen action is \textit{wait}.  There is a reward of $0$ at every other state if the chosen action is \textit{wait}. When the action is \textit{cut \& sell}, the reward at the youngest state $i=1$ is $0$, there is a reward of $1$ in any other state $i \in \{1, ..., n-1\}$, and a reward of $2$ in $i=n$. 
\subsubsection{Healthcare MDP.}
In this instance,  the state represents the health condition of the patient. The decision maker has to choose different levels of invasiveness for the treatment (\textit{low}, \textit{medium} or \textit{high} level of drug). The goal is to minimize the invasiveness of the treatment while avoiding the \textit{mortality} terminal state. This is inspired from the MDPs presented in \cite{grand2020robust} for modeling proactive transfers of patients to the Intensive Care Unit and \cite{Goh} for  fecal immunochemical testing. A representation of the MDP with $5$ states and the mortality state can be found in Figures \ref{fig:MDP-1}-\ref{fig:MDP-3}.
\paragraph{States.}  There are $n$ states.  State $1$ (resp.  state $n-1$) represents the healthiest (resp. the worst) health condition of the patient. State $n$ is the mortality state (labeled $m$ here).
\paragraph{Actions.} There are three actions, \textit{low}, \textit{medium} and \textit{high}, corresponding to different levels of invasiveness of the treatment. 
\paragraph{Transitions.} The mortality state is an absorbing state. More invasiveness treatments are more effective at bringing the next state toward the healthier states but also more expensive. The details of the transitions are given in Figures \ref{fig:MDP-1}-\ref{fig:MDP-3}.

\begin{figure*}[htp]
  \centering
  \begin{subfigure}{0.3\textwidth}\centering
  \includegraphics[width=0.8\linewidth]{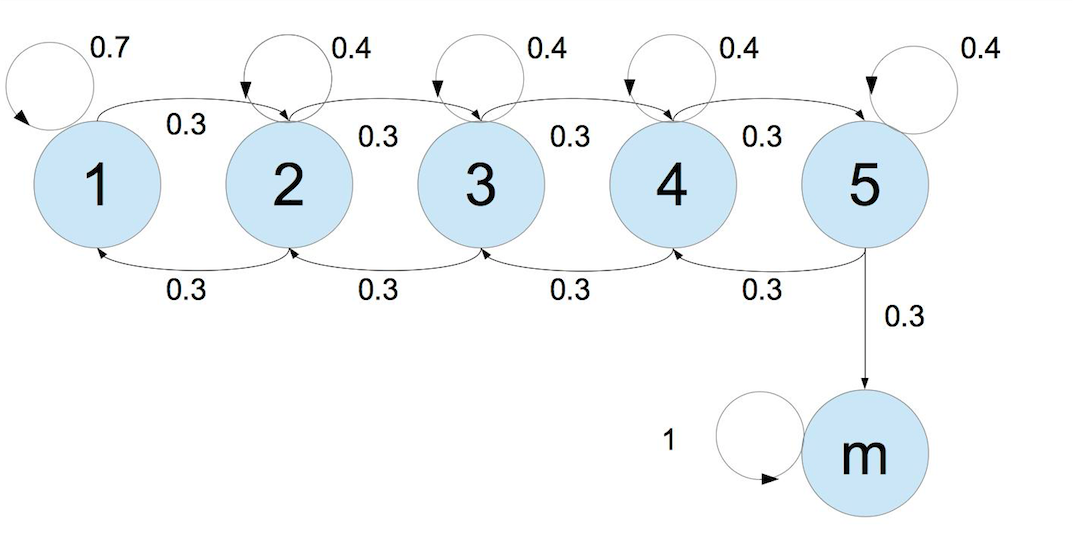}
  \captionof{figure}{ Transition for action = \textit{low drug level}.}
  \label{fig:MDP-1}
\end{subfigure}
  \begin{subfigure}{0.3\textwidth}\centering
\centering
  \includegraphics[width=0.8\linewidth]{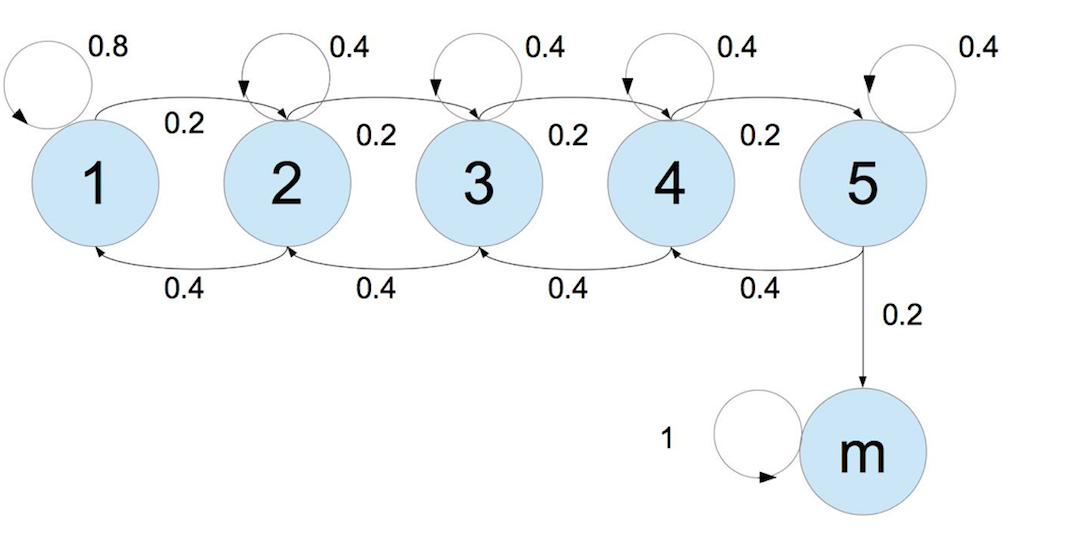}
  \captionof{figure}{ Transition for action = \textit{medium drug level}.}
  \label{fig:MDP-2}
\end{subfigure}
  \begin{subfigure}{0.3\textwidth}\centering
\centering
  \includegraphics[width=0.8\linewidth]{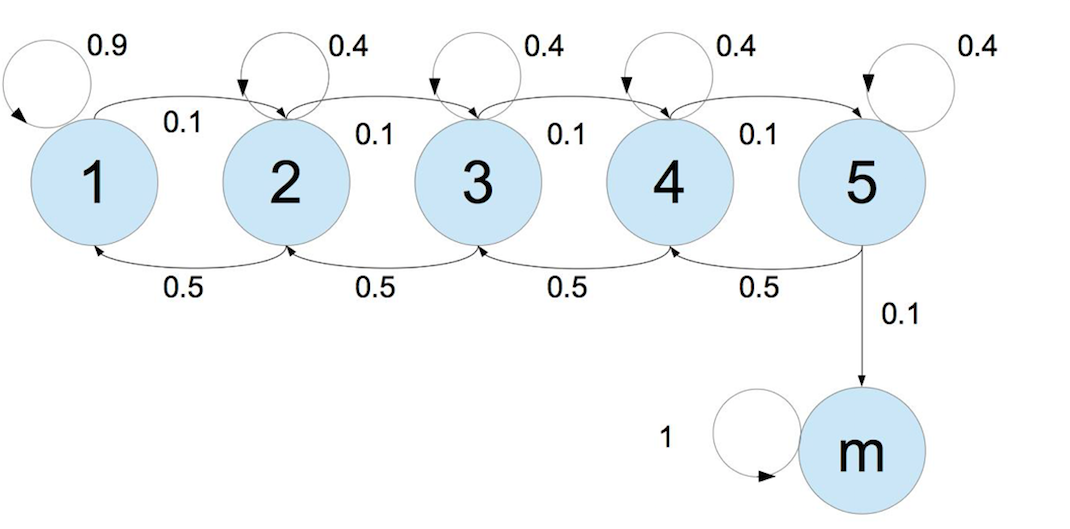}
  \captionof{figure}{ Transition for action = \textit{high drug level}.}
  \label{fig:MDP-3}
\end{subfigure}
\caption{Transition rates for the healthcare MDP instance.}
\end{figure*}
}
\end{document}